\documentclass{article}
\usepackage{structure}

\newcommand{\preclex}{\preceq_{\text{lex}}}
\renewcommand{\vec}[1]{\mathbf{#1}}
\newcommand{\PA}{\mathcal{PA}}

\title{Discrepancy of Arithmetic Progressions in Boxes and Convex Bodies}
% Authors: full names plus addresses.
\author{Lily Li\thanks{University of Toronto, Department of Computer Science (xinyuan@cs.toronto.edu).} \ and Aleksandar Nikolov\thanks{University of Toronto, Department of Computer Science (anikolov@cs.toronto.edu).}}
\date{}

\begin{document}
\maketitle
\begin{abstract}
    The combinatorial discrepancy of arithmetic progressions inside $[N] := \{1, \ldots, N\}$ is the smallest integer $D$ for which $[N]$ can be colored with two colors so that any arithmetic progression in $[N]$ contains at most $D$ more elements from one color class than the other. Bounding the discrepancy of such set systems is a classical problem in discrepancy theory. More recently, this problem was generalized to arithmetic progressions in grids like $[N]^d$ (Valk{\'o}) and $[N_1]\times \ldots \times [N_d]$ (Fox, Xu, and Zhou). In the latter setting, Fox, Xu, and Zhou gave upper and lower bounds on the discrepancy that match within a $\frac{\log |\Omega|}{\log \log |\Omega|}$ factor, where $\Omega := [N_1]\times \ldots \times [N_d]$ is the ground set. In this work, we use the connection between factorization norms and discrepancy to improve their upper bound to be within a $\sqrt{\log|\Omega|}$ factor from the lower bound. We also generalize Fox, Xu, and Zhou's lower bound, and our upper bounds to arithmetic progressions in arbitrary convex bodies.
    %
    %The set of all arithmetic progressions on the interval of length $N$ is a set system whose universe consists of the integers $\Omega \coloneqq [N]$ and contains all sets $S \in \mathcal{S}$ of the form $S = \{a + ib: i = 0, ..., \ell - 1\} \cap [N]$ for positive integers $a, b, \ell$. Bounding the discrepancy of such set systems is a classic problem in discrepancy theory. In higher dimensions the problem generalizes to finding the discrepancy of all arithmetic progressions in $d$-dimensional, axis-aligned rectangle $[N_1] \times \cdots \times [N_d]$ whose integer points are denoted $\Omega$. Recently, Fox, Xu, and Zhou [Mathematika, 2024] showed that the discrepancy of all arithmetic progressions in a $d$-dimensional rectangle is bounded below by $f(N_1, ..., N_d) \coloneqq \max_{I \subseteq [d]} \left(\prod_{i \in I} N_i\right)^{\frac{1}{2|I|+2}}$ and above by $\frac{\log |\Omega|}{\log \log \Omega}\cdot f(N_1, ..., N_d)$. We improve the upper bound to $\sqrt{\log |\Omega|}\cdot f(N_1, ..., N_d)$. Further, our upper bound is constructive --- as opposed to the existential bound of Fox, Xu, Zhou --- and uses only simple properties of the $\gamma_2$-norm --- as opposed to the complex number theoretic properties used by Fox, Xu, Zhou. To generalize the type of bounding body considered, we found upper and lower bound on the discrepancy of all arithmetic progressions in any convex body where the gap between the upper and lower bound again differs by $\sqrt{\log |\Omega|}$ where $\Omega$ is the set of integer points inside the body. 
\end{abstract}
\section{Introduction}
Combinatorial discrepancy theory studies set systems (or, equivalently, hypergraphs) $(\Omega, \mathcal{S})$ where $\Omega$ is the underlying (finite) universe of elements and $\mathcal{S}$ is a family of subsets of $\Omega$. For a fixed coloring $\chi: \Omega \rightarrow \{-1, 1\}$, the discrepancy of $\mathcal{S}$ with respect to $\chi$, denoted $\disc(\mathcal{S}, \chi)$, is defined as 
\begin{equation}
    \disc\left(\mathcal{S}, \chi\right) \coloneqq \max_{S \in \SS}\left|\sum_{\omega \in S}\chi(i)\right|.
\end{equation}
Then the discrepancy of $\mathcal{S}$, denoted by $\disc(\mathcal{S})$, is defined as
\begin{equation}\label{eq:disc}
    \disc\left(\mathcal{S}\right) \coloneqq \min_{\chi: \Omega \to \{-1, 1\}} \disc\left(\mathcal{S}, \chi\right)= \min_{\chi: \Omega \to \{-1, 1\}}\max_{S \in \SS}\left|\sum_{\omega \in S}\chi(i)\right|.
\end{equation}
Thus, a set system has low discrepancy if it is possible to color the elements in its universe with two colors so that all the sets in the system are simultaneously balanced. Combinatorial discrepancy has numerous connections to other fields of mathematics and computer science, e.g., to irregularities of distribution~\cite{BeckChen}, approximation algorithms for $\mathsf{NP}$-hard problems~\cite{Rothvoss-binpacking,HR17-binpacking,BRS22-flowtime}, data structures~\cite{Larsen-range}, among others. We refer to the books~\cite{matousek2009geometric,Chazelle-book}, and the thesis~\cite{sasho-thesis} for further references.

One of the first set systems whose discrepancy was thoroughly studied is the set of all arithmetic progressions (APs) on $[N] \eqdef \{1, \ldots, N\}$. We denote this set system $\left([N], \AA_N\right)$ where, $\AA_N \eqdef \{\AP(a,b,\ell) \cap [N]: a,b \in [N], \ell \in \mathbb{Z}_{>0}\}$, and $\AP(a,b,\ell)$ is the arithmetic progression starting at $a$ with difference step sizes $b$ and $\ell$ steps:
\begin{equation}\label{eq:ap}
    \mathrm{AP}(a, b, \ell) =  \{a + i b: i = 0, \ldots, \ell-1\}.
\end{equation} 
In an important early paper~\cite{roth1964remark}, Roth used a Fourier theoretic argument to show that
\begin{equation}\label{eq:rothlb}
    \disc(\AA) \gtrsim N^{1/4},
\end{equation}
where the notation $A \gtrsim B$ means that there exists an absolute constant $C$ such that $A \ge CB$. We will similarly use $A \lesssim B$ for $B \gtrsim A$, and $A \asymp B$ for $A \lesssim B \lesssim A$. Roth remarked that his lower bound shows that ``there are various limitations to the extent to which a sequence of natural numbers can be well-distributed simultaneously among and within all congruence classes.'' Indeed, since $[N]$ is itself an arithmetic progression, a low discrepancy coloring must color about half of $[N]$ with $+1$. Taking $s$ to be the sequence of all integers in $[N]$ colored $+1$, Roth's lower bound~\eqref{eq:rothlb} shows that the density of $s$ in some arithmetic progression must be substantially different from $\frac12$. Thus $s$ cannot be simultaneously well distributed within all arithmetic progressions. This fact is of similar nature as, for example, van der Waerden's theorem.

A uniformly random coloring of $[N]$ achieves discrepancy in $O(\sqrt{N\log N})$ with high probability, so Roth conjectured that this simple upper bound ought to be closer to the truth than his lower bound~\eqref{eq:rothlb}. This turned out to be false, and, after several weaker results, Beck showed that $\disc(\AA) \lesssim N^{1/4}\left(\log N\right)^{5/2}$, which matches~\eqref{eq:rothlb} up to the logarithmic factors~\cite{beck1981roth}. Beck introduced his celebrated partial coloring method in the proof of this upper bound. A subsequent refinement of the partial coloring method was used by Matou\v{s}ek and Spencer to show that
\begin{equation}\label{eq:matousekub}
   \disc(\AA) \lesssim N^{1/4} 
\end{equation} 
proving that Roth's lower bound was tight up to constant factors~\cite{matouvsek1996discrepancy}.  

Since then, Valk{\'o}~\cite{valko2002discrepancy} and Fox, Xu, Zhou~\cite{fox2024discrepancy} have investigated upper and lower bounds on the discrepancy of arithmetic progressions in $d$-dimensional axis-aligned boxes. We define the set system of $d$-dimensional arithmetic progressions following the notation of Fox, Xu, Zhou~\cite{fox2024discrepancy}. Here, the universe is the set of all integer points inside an axis-aligned, $d$-dimensional box. In particular, given positive integers $N_1, ..., N_d$, define the vector $\mathbf{N} \coloneqq (N_1, \ldots, N_d)$, as the side lengths of the box in each dimension. Then we have $\Omega_{\mathbf{N}} \eqdef [N_1] \times \cdots \times [N_d]$. Note that the size of $\Omega_{\mathbf{N}}$ is $|\Omega_{\mathbf{N}}| = N_1 \cdots N_d$. The set system of arithmetic progressions in $\Omega_{\vec{N}}$, denoted $\AA_{\mathbf{N}}$, contains all sets of the form $\Omega_{\mathbf{N}} \cap \mathrm{AP}_d(\mathbf{a}, \mathbf{b},\ell)$, where 
\begin{equation}\label{eq:apd}
    \mathrm{AP}_d(\mathbf{a}, \mathbf{b},\ell) \eqdef \{\mathbf{a} + i\mathbf{b}: i = 0, \ldots, \ell-1\},
\end{equation}
and $\mathbf{a}$ and $\mathbf{b}$ are vectors in $\Z^d$ and $\ell$ is a positive integer. This matches the one-dimensional definition given in equation~\eqref{eq:ap}. 

Next, we define a function which bounds the discrepancy of the set system $(\Omega_{\mathbf{N}}, \AA_{\mathbf{N}})$. Let $f: \Z^d \rightarrow \R$ be
\begin{equation}\label{eq:ub}
    f\left(\mathbf{N}\right) \eqdef \max_{I \subseteq [d]} \left(\prod_{i \in I} N_i\right)^{\frac{1}{2|I| + 2}},
\end{equation}
where the empty product is equal to $1$ by definition. When $d = 1$, we have $f(N_1) = N_1^{1/4}$ which is asymptotically the upper and lower bound on $\AA_N$ as we have seen in equations~\eqref{eq:rothlb} and~\eqref{eq:matousekub}.

Valk\'{o}~\cite{valko2002discrepancy} considered the cube $\Omega = [N]^d$ and showed that 
\[f(\textbf{N}) = N^{\frac{d}{2d + 2}} \lesssim_d \disc\left(\AA_{\mathbf{N}}\right) \lesssim_d N^{\frac{d}{2d + 2}}\left(\log N\right)^{5/2}.\] 
Here, the notation $A \lesssim_d B$ means that there exists a constant $C_d$ that only depends on the dimension $d$ such that $A \le C_d B$. Analogously, $A\gtrsim_d B$ is equivalent to $B\lesssim_d A$, and $A \asymp_d B$ is equivalently to $A \lesssim_d B\lesssim_d A$.

Later, Fox, Xu, Zhou~\cite{fox2024discrepancy} removed the logarithmic term from Valk\'{o}'s upper bound, and considered general $d$-dimensional boxes, showing the following bounds on $\disc\left(\AA_{\mathbf{N}}\right)$.
\begin{theorem}{\cite[Theorem 1.3]{fox2024discrepancy}}\label{thm:FXZ}
    For any positive integer $d$ and vector $\mathbf{N} = \left(N_1, \ldots, N_d\right)$, where each $N_i$ is an integer greater than $1$, let $\AA_{\mathbf{N}}$ be the family of all arithmetic progression defined on the universe $\Omega_{\mathbf{N}} $. Then the discrepancy of $\AA_{\mathbf{N}}$ is bound by
    % \begin{equation}\label{eq:FXZ}
    %     c\max_{I \subseteq [d]}\left(\prod_{i \in I} N_i\right)^{\frac{1}{2|I| + 2}}
    %     \le
    %     \disc(\AA_{\mathbf{N}}) \le C\frac{\log |\Omega|}{\log \log |\Omega|} 
    %     \max_{I \subseteq [d]}\left(\prod_{i \in I} N_i\right)^{\frac{1}{2|I| + 2}},
    % \end{equation}
    \begin{equation}\label{eq:FXZ}
        f\left(\mathbf{N}\right) \lesssim_d
        \disc\left(\AA_{\mathbf{N}}\right) \lesssim_d
        \frac{\log |\Omega_{\mathbf{N}}|}{\log \log |\Omega_{\mathbf{N}}|} \cdot  f\left(\mathbf{N}\right).
    \end{equation}
    % \begin{equation}\label{eq:FXZ}
    %     f\left(\mathbf{N}\right) \lesssim_d
    %     \disc\left(\AA_{\mathbf{N}}\right) \lesssim_d
    %     \frac{\log N}{\log \log N} f\left(\mathbf{N}\right).
    % \end{equation}
\end{theorem}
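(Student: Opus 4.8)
The two inequalities in Theorem~\ref{thm:FXZ} are essentially independent, and in each case the plan is to begin by fixing a coordinate subset $I \subseteq [d]$ attaining the maximum in~\eqref{eq:ub}, writing $k = |I|$ and $\mathbf{N}_I = (N_i)_{i \in I}$, so that $f(\mathbf{N}) = (\prod_{i \in I} N_i)^{1/(2k+2)} = |\Omega_{\mathbf{N}_I}|^{1/(2k+2)}$. For the lower bound I would restrict attention to the sub-family of $\AA_{\mathbf{N}}$ consisting of progressions whose common difference is nonzero in exactly the coordinates of $I$. Such a progression is constant in every coordinate $j \notin I$, so it lies in a single affine slice $\{x : x_j = c_j \text{ for } j \notin I\}$, and within one slice these progressions form a copy of the family $\mathcal{G}_{\mathbf{N}_I}$ of full-support progressions in the $k$-box $\Omega_{\mathbf{N}_I}$. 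Since the slices partition $\Omega_{\mathbf{N}}$, this sub-family is a disjoint union of isomorphic copies of $\mathcal{G}_{\mathbf{N}_I}$, so $\disc(\AA_{\mathbf{N}}) \ge \disc(\mathcal{G}_{\mathbf{N}_I})$. It is thus enough to prove that $\disc(\mathcal{G}_{\mathbf{M}}) \gtrsim_k |\Omega_{\mathbf{M}}|^{1/(2k+2)}$ for every $k$-dimensional box $\mathbf{M}$, where $\mathcal{G}_{\mathbf{M}}$ denotes the family of full-support progressions in $\Omega_{\mathbf{M}}$.

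For this $k$-dimensional estimate I would carry out Roth's Fourier-analytic argument — the one behind~\eqref{eq:rothlb} — in $k$ dimensions. Given a colouring $\chi : \Omega_{\mathbf{M}} \to \{-1,1\}$, one may assume $\chi$ is nearly balanced on $\Omega_{\mathbf{M}}$ (the whole box being a progression), embed $\chi$ by zero into a product $\prod_i \mathbb{Z}_{q_i}$ of cyclic groups with $q_i \asymp M_i$, and average $\bigl|\sum_{\omega \in S} \chi(\omega)\bigr|^2$ over all full-support progressions $S = \mathrm{AP}_k(\mathbf{a}, \mathbf{b}, \ell)$ with $1 \le b_i \le t_i$, for a scale vector $\mathbf{t} = (t_1, \dots, t_k)$ and common length $\ell$ to be chosen. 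Expanding in characters, the diagonal term contributes $\asymp \ell$ to the average, while the off-diagonal terms are autocorrelations of $\chi$ weighted by the number of progressions in the family through a pair of points; using near-balancedness to annihilate the principal character and Parseval to bound the autocorrelation mass, one shows these are $o(\ell)$ provided the progressions fit in the box ($\ell t_i \lesssim M_i$) and the family is large enough. The exponents balance at $\ell \asymp |\Omega_{\mathbf{M}}|^{1/(k+1)}$ — which is $\le \min_i M_i$ exactly because the maximality of $I$ forces each side of $\mathbf{N}_I$ to be at least $|\Omega_{\mathbf{N}_I}|^{1/(k+1)}$ — giving average squared discrepancy $\gtrsim_k |\Omega_{\mathbf{M}}|^{1/(k+1)}$ and hence a (full-support) progression of discrepancy $\gtrsim_k |\Omega_{\mathbf{M}}|^{1/(2k+2)}$. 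I expect the main obstacle to be the off-diagonal estimate for a genuinely skew box: one must use a non-uniform scale vector $\mathbf{t}$, rather than the single global scale that suffices when $d = 1$ or $\mathbf{M}$ is a cube, and one must check that maximality of $I$ in~\eqref{eq:ub} is precisely the condition making every error term negligible.

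For the upper bound I would produce the colouring by an iterated partial-colouring (entropy) scheme, with two modifications that together replace the naive $\log|\Omega_{\mathbf{N}}|$ factor by $\frac{\log|\Omega_{\mathbf{N}}|}{\log\log|\Omega_{\mathbf{N}}|}$. First, each round colours all but a $1/\mathrm{polylog}(|\Omega_{\mathbf{N}}|)$ fraction of the currently uncoloured points, rather than half of them; since $\Omega_{\mathbf{N}}$ contains only $|\Omega_{\mathbf{N}}|^{O_d(1)}$ progressions, the entropy/union-bound overhead per round is still $O_d(\log|\Omega_{\mathbf{N}}|)$, yet $O(\log|\Omega_{\mathbf{N}}|/\log\log|\Omega_{\mathbf{N}}|)$ rounds already colour everything. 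Second, one cannot impose discrepancy $\lesssim_d f(\mathbf{N})$ on every progression in one round, since progressions can be long; instead one groups progressions dyadically by length and by the support $I$ of their common difference, uses that a progression with support $I$ has length at most $\min_{i \in I} N_i \le (\prod_{i \in I} N_i)^{1/|I|}$ and that there are $|\Omega_{\mathbf{N}}|^{O_d(1)}$ of them, and defers the tight constraint on such a progression until the first round in which its number of surviving uncoloured points has dropped to $\lesssim_d f(\mathbf{N})^2/\log|\Omega_{\mathbf{N}}|$. Feasibility of a round then reduces, support class by support class, to an inequality of the form $(\text{surviving length}) \cdot \log(\text{number of progressions in the class}) \lesssim_d f(\mathbf{N})^2$, which holds exactly because $f(\mathbf{N}) \ge (\prod_{i \in I} N_i)^{1/(2|I|+2)}$ for every $I$; summing the $O_d(f(\mathbf{N}))$ per-round discrepancies over the $O(\log|\Omega_{\mathbf{N}}|/\log\log|\Omega_{\mathbf{N}}|)$ rounds gives the claimed bound. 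The delicate point is making the deferral rigorous: one must ensure that a long progression's partial sum stays $O_d(f(\mathbf{N}))$ during the rounds in which it is not directly constrained, which requires adding auxiliary constraints that force each progression's uncoloured part to keep shrinking at its nominal rate and then a union bound over rounds — standard in spirit, but where the accounting has to be done with care.
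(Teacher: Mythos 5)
First, a point of orientation: the paper does not prove Theorem~\ref{thm:FXZ} at all --- it is quoted from Fox, Xu, and Zhou, and the paper's own contributions are to \emph{improve} its upper bound (\Cref{thm:main}, via $\gamma_2$ and prefix discrepancy) and to \emph{generalize} its lower bound to convex bodies (\Cref{thm:generalbodies-lb}). So the relevant comparison is with the cited proof and with the paper's generalization of the lower-bound half.

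Your lower-bound plan is sound and is essentially the known argument. The reduction to the maximizing index set $I$ via slices is correct: each progression whose difference is supported exactly on $I$ lives in one slice, the slices partition $\Omega_{\mathbf{N}}$, and restricting any coloring to a slice gives $\disc(\AA_{\mathbf{N}}) \ge \disc(\mathcal{G}_{\mathbf{N}_I})$. Your consistency check --- that maximality of $I$ forces every side of the sub-box to be at least $|\Omega_{\mathbf{N}_I}|^{1/(k+1)}$, so the length $\ell \asymp |\Omega_{\mathbf{N}_I}|^{1/(k+1)}$ fits --- is exactly the content of Lemma~\ref{lm:f-alt} and equation~\eqref{eq:prod-s}. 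The second-moment Fourier computation you outline (with $\ell \asymp f(\mathbf{N})^2$ and differences in a box of the right scale) is the same calculation the paper carries out in general form in \Cref{thm:generalbodies-lb}, where the role of your scale vector $\mathbf{t}$ is played by $m(K-K)$ and the pigeonhole step produces a difference whose phase is small; the paper's version even avoids the slicing by working with $K-K$ directly.

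The upper-bound plan has a genuine gap. The paper notes, immediately after the theorem, that the $\log\log$ saving in Fox--Xu--Zhou's proof comes from their tight bound for the cube $[N]^d$, which rests on a geometry-of-numbers argument; your scheme tries to bypass this entirely with a ``color all but a $1/\mathrm{polylog}$ fraction per round'' entropy argument plus deferred constraints, and the deferral is where it breaks. If a long progression carries no discrepancy constraint during a round, the partial coloring is free to assign its colored portion a signed sum comparable to its surviving length --- auxiliary constraints that merely force its uncolored part to shrink say nothing about the signed sum of the part that gets colored. So you must constrain every progression in every round, which costs $\asymp \sqrt{s_r(S)\log|\Omega_{\mathbf{N}}|}$ in round $r$, where $s_r(S)$ is the surviving size of $S$; and the partial coloring lemma controls only the total number of surviving points, not the per-set counts $s_r(S)$, so the sum over rounds need not telescope to $O_d(f(\mathbf{N}))$ times the number of rounds. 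This is precisely the obstruction that standard partial coloring only yields $(\log|\Omega_{\mathbf{N}}|)\cdot f(\mathbf{N})$, and overcoming it is what requires the cube theorem in the cited proof (or, in this paper, the entirely different $\gamma_2$/prefix-discrepancy route, which in fact gives the stronger $\sqrt{\log|\Omega_{\mathbf{N}}|}$ factor).
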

Note that a weaker upper bound of $\left(\log |\Omega_{\mathbf{N}}|\right)\cdot f\left(\mathbf{N}\right)$ can be achieved using standard partial coloring methods~\cite{matousek2009geometric}. The improvement by a $\log \log |\Omega_{\mathbf{N}}|$ factor relies on Fox, Xu, Zhou's upper bound for $[N]^d$, which itself relies on a sophisticated argument involving techniques from the geometry of numbers.% Further, this upper bound is non-constructive as the proof relies upon an entropy-based method similar to the one used by Matou\v{s}ek and Spencer~\cite{matouvsek1996discrepancy} to prove the upper bound on the discrepancy of arithmetic progression in one-dimension (equation~\eqref{eq:matousekub}) as well as Spencer's foundational result bounding the discrepancy of all set systems~\cite{spencer1985six}. 

In this work, one of our main results, \Cref{thm:main}, is a tighter upper bound on the discrepancy of arithmetic progressions in $d$-dimensional boxes. 
\begin{restatable}[APs in Boxes]{theorem}{main}
\label{thm:main}%
    For any positive integer $d$ and $\mathbf{N} = \left(N_1, \ldots, N_d\right)$, where each $N_i$ is an integer greater than $1$, let $\AA_{\mathbf{N}}$ be the family of all arithmetic progression defined on the universe $\Omega_{\mathbf{N}} $. The discrepancy of $\AA_{\mathbf{N}}$ is bounded above as 
    \begin{equation}\label{eq:main}
        \disc\left(\AA_{\mathbf{N}}\right) \lesssim_d
        \sqrt{\log |\Omega_{\mathbf{N}}|} \cdot f\left(\mathbf{N}\right).
    \end{equation}
    Further, the upper bound is constructive, i.e., there exists a randomized algorithm which computes a coloring $\chi$ in time polynomial in $|\Omega_{\mathbf{N}}|$ such that, with  high probability, $\disc\left(\AA_{\mathbf{N}}, \chi\right) \lesssim_d \sqrt{\log |\Omega_{\mathbf{N}}|}\cdot f(\mathbf{N})$.
\end{restatable}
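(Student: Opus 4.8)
The plan is to bound $\disc(\AA_{\mathbf{N}})$ through the factorization norm $\gamma_2$, where for a real matrix $A$ one sets $\gamma_2(A) = \min\{\|B\|_{2\to\infty}\|C\|_{1\to 2} : A = BC\}$, with $\|B\|_{2\to\infty}$ the largest Euclidean norm of a row of $B$ and $\|C\|_{1\to 2}$ the largest Euclidean norm of a column of $C$ (equivalently, $\gamma_2(A) = \min\{\max_P\|x_P\|\cdot\max_\omega\|y_\omega\| : A_{P,\omega} = \langle x_P,y_\omega\rangle\}$). Applying Banaszczyk's vector balancing theorem to such a factorization --- balancing the columns of $C$ inside the symmetric convex body $\{x : \|Bx\|_\infty \le t\}$, whose standard Gaussian measure exceeds $1/2$ once $t$ is a large enough constant multiple of $\gamma_2(A)\sqrt{\log m}$ --- gives $\disc(A) \lesssim \gamma_2(A)\sqrt{\log m}$, where $m$ is the number of rows of $A$; and the Gram--Schmidt walk of Bansal, Dadush, Garg, and Lovett attains this bound by a randomized algorithm that runs in time polynomial in the size of $B$ and $C$. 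Since an arithmetic progression meeting $\Omega_{\mathbf{N}}$ is, ignoring irrelevant tails outside the box, determined by its first point in $\Omega_{\mathbf{N}}$, a common difference $\mathbf{b}$ with $|b_i| \le N_i$ for every $i$, and a length at most $|\Omega_{\mathbf{N}}|$, there are at most $3^d|\Omega_{\mathbf{N}}|^3$ such sets, so $m \le 3^d|\Omega_{\mathbf{N}}|^3$ and, using $d \le \log_2|\Omega_{\mathbf{N}}|$, $\sqrt{\log m} \lesssim \sqrt{\log|\Omega_{\mathbf{N}}|}$. Hence \Cref{thm:main} reduces to establishing the clean inequality
\[
  \gamma_2(A_{\mathbf{N}}) \lesssim_d f(\mathbf{N})
\]
for the incidence matrix $A_{\mathbf{N}}$ of $(\Omega_{\mathbf{N}}, \AA_{\mathbf{N}})$, together with an explicit factorization of polynomial size to keep the statement constructive.

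To bound $\gamma_2(A_{\mathbf{N}})$ I would work with the calculus of this norm: it is monotone under deleting rows or columns; for a block-diagonal matrix with disjoint row and column supports it equals the largest $\gamma_2$ of a block; for a row-wise concatenation of matrices sharing a column set it is at most $(\sum_j\gamma_2(A^{(j)})^2)^{1/2}$, while for a column-wise concatenation of $k$ matrices sharing a row set it is at most $\sqrt{k}$ times the largest $\gamma_2$ of a block; and $\gamma_2(A\otimes A') = \gamma_2(A)\gamma_2(A')$. The one elementary structural reduction is that, after fixing a primitive direction $\mathbf{v}$, the progressions pointing along $\mathbf{v}$ partition $\Omega_{\mathbf{N}}$ into parallel lines, each meeting the box in a block of at most $L_{\mathbf{v}} \lesssim \min_{i : v_i \ne 0} N_i/|v_i|$ consecutive lattice points on which the progressions form a one-dimensional AP system; by block-diagonality over the lines, this direction-$\mathbf{v}$ family is controlled by the one-dimensional bound for $L_{\mathbf{v}}$ points. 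The main obstacle is that this is far from sufficient: aggregating the per-direction bounds, even in the $\ell_2$ fashion that the concatenation rule permits, yields $N^{d/2}$ for the cube $[N]^d$ instead of the correct $f(\mathbf{N}) = N^{d/(2d+2)}$, because progressions in different directions and at different length scales overlap heavily; and a single-scale ``coarse versus fine'' decomposition does not help, since cutting the box into $K$-blocks recombines the within-block pieces through a column-wise concatenation whose $\sqrt{|\Omega_{\mathbf{N}}|/K^d}$ factor cancels any saving.

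Extracting the exponents of $f$ should instead require exploiting the arithmetic self-similarity of progressions across a whole geometric ladder of scales at once. Writing each side length as $N_i = M_iM_i'$ identifies $\Omega_{\mathbf{N}}$ with $\Omega_{\mathbf{M}'}\times\Omega_{\mathbf{M}}$ through base-$M_i$ digits, under which a progression whose common difference has every entry below the corresponding $M_i$ becomes an exact progression in the fine grid $\Omega_{\mathbf{M}}$ carrying a slow drift through the coarse grid $\Omega_{\mathbf{M}'}$, whereas a progression with a large difference entry is short in that coordinate and is absorbed by the submatrix bound; iterating this over geometrically spaced scales, and at each scale rounding progressions to the current coarse grid --- the step responsible for the gain, in the same spirit as the multi-scale entropy argument behind the $N^{1/4}$ bound when $d = 1$ --- one would express $A_{\mathbf{N}}$, up to $\gamma_2$, as a controlled combination of systems on effective universes that shrink geometrically, with the ensuing optimization over which coordinates to refine, and how fast, producing precisely $\max_{I\subseteq[d]}(\prod_{i\in I}N_i)^{1/(2|I|+2)}$. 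The technical heart of the proof is to carry this out losing no logarithmic factor --- so that the only logarithm left in the end is Banaszczyk's $\sqrt{\log|\Omega_{\mathbf{N}}|}$ --- while correctly charging the carries between digits and the fact that the coarse-scale drift is a blow-up of a progression rather than a progression itself. Once the explicit, polynomial-size factorization witnessing $\gamma_2(A_{\mathbf{N}}) \lesssim_d f(\mathbf{N})$ is in place, the first paragraph yields both the bound \eqref{eq:main} and the promised randomized polynomial-time algorithm.
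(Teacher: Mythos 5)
Your first paragraph is correct and is exactly the paper's framework: the theorem reduces, via the Gram--Schmidt walk and the bound $\disc(\vec{A})\lesssim\sqrt{\log 2m}\,\gamma_2(\vec{A})$, to proving $\gamma_2(\AA_{\vec{N}})\lesssim_d f(\vec{N})$, and your count of the number of distinct APs is adequate to absorb $\sqrt{\log m}$ into $\sqrt{\log|\Omega_{\vec{N}}|}$. The gap is that you never prove the $\gamma_2$ bound. Your third paragraph is a program, not an argument: the base-$M_i$ digit decomposition is only sketched, you explicitly leave open how to ``charge the carries'' and how to handle the fact that the coarse-scale drift is not itself a progression, and there is no verification that the ensuing optimization produces $f(\vec{N})$ rather than something weaker. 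Since the entire content of the theorem beyond the (standard) first paragraph is this inequality, the proposal does not constitute a proof.

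The missing idea is considerably simpler than the multi-scale scheme you propose, and it is worth naming because your second paragraph comes close to it before turning away. First bound $\gamma_2$ of the system $\MM_{\vec{N}}$ of \emph{maximal} APs: fix a threshold $s$, put the maximal APs all of whose translates (with the same difference $\vec{b}$) have size at most $s$ into one class, bounded by $\sqrt{s}$ via the size bound; for the rest, the key counting fact (from Fox--Xu--Zhou) is that the number of differences $\vec{b}$ admitting a maximal AP of length $>s$ is $\lesssim_d \max_{I}\prod_{i\in I}(N_i/s)$, and since for each fixed $\vec{b}$ the maximal APs partition $\Omega_{\vec{N}}$, the union over these $\vec{b}$ has degree at most that count, giving $\gamma_2\lesssim_d\bigl(\max_I\prod_{i\in I}N_i/s\bigr)^{1/2}$ by the degree bound. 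Choosing $s=f(\vec{N})^2$ balances the two terms. This is an $\ell_2$-aggregation of \emph{degrees} across directions, not of per-direction one-dimensional $\gamma_2$ bounds, which is precisely why it avoids the $N^{d/2}$ loss you correctly identify. One then passes from maximal APs to all APs either by observing that every AP is a lexicographic interval of a maximal AP and invoking a prefix-discrepancy version of Banaszczyk's theorem (non-constructive), or, for the constructive statement, by an induction that halves the longest side length: each prefix-maximal AP in the box splits into a maximal AP in one half and a prefix-maximal AP in the other, and the resulting recursion $\gamma_2(\mathcal{PA}_{\vec{N}})\le\gamma_2(\MM_{\vec{N}'})+\gamma_2(\mathcal{PA}_{\vec{N}'})$ converges geometrically because $f(\vec{N}')\le 2^{-1/(2d+2)}f(\vec{N})$. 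Without some such mechanism your proposal has no proof of the inequality on which everything rests.
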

%\main* % Leaving this here so I don't forget how to call the theorem again
Unlike Theorem~\ref{thm:FXZ}, Theorem~\ref{thm:main} uses the connection between discrepancy and the $\gamma_2$ factorization norm due to Matou\v{s}ek, Nikolov, and Talwar~\cite{matouvsek2020factorization}, rather than the partial coloring technique. 
Instead of the delicate number theoretic arguments used by Fox, Xu, and Zhou, our proof uses only simple combinatorial arguments, and properties of factorization norms.

As had Fox, Xu, Zhou, we conjecture that the lower bound in equation~\eqref{eq:FXZ} is tight. Unfortunately, there seems to be no way to remove the final $\sqrt{\log |\Omega_{\mathbf{N}}|}$ factor using our methods. It is conceivable that partial coloring arguments --- even more sophisticated than the one used by Fox, Xu, Zhou~\cite{fox2024discrepancy} --- could give a tight bound.

In addition to arithmetic progressions in $d$-dimension axis-aligned boxes, we can generalize our upper  bounds to arithmetic progressions in \emph{any} $d$-dimensional convex body. First, we will need to extend our definition of $f$ from equation~\eqref{eq:ub} in order to encapsulate these new structures. Let $K$ be a $d$-dimensional convex body (i.e., a closed bounded convex set with non-empty interior) and $t$ a scalar.  Recall that $K-K$, i.e., the Minkowski sum of $K$ and $-K$, is defined as $\{\mathbf{x}-\mathbf{y}: \mathbf{x}, \mathbf{y} \in K\}$. Then define
\begin{equation}\label{eq:ub-general}
    \zeta_{K-K}(t) \eqdef |\Z^d \cap t(K-K)|. % = |\{\mathbf{x} \in \Z^d: \|\mathbf{x}\|_K \le t\}|
\end{equation}
to be the number of integer points inside $t(K-K)$. Further define 
\begin{equation}\label{eq:ub-body}
    f(K) \coloneqq \sqrt{s^*}\mbox{ where }s^* = \inf\left\{s: s \ge \zeta_{K-K}\left(\frac{1}{s}\right)\right\}.
\end{equation}
Note that when we restrict $K$ to be the $d$-dimensional box $[1, N_1]\times\cdots\times [1, N_d]$, we have $K-K=[-N_1+1, N_1-1] \times \cdots \times [-N_d+1,N_d-1]$, and thus
\[
\zeta_{K-K}(t) = 
\prod_{i=1}^d \left(1 + 2\floor{t (N_i-1)}\right)
\asymp_d \max_{I\subseteq [d]}\left(t^{|I|} \prod_{i \in I} N_i\right),
\]
where $A\asymp_d B$ is shorthand for  $A \lesssim_d B$ and $B\lesssim_d A$ holding simultaneously.
In the proof of Lemma~\ref{lem:gamma2-map-ub} below % (see equation~\eqref{eq:prod-s}) 
we show that $s=f(\mathbf{N})^2$ satisfies
\(
s \asymp \max_{I\subseteq [d]}\left(\frac{1}{s^{|I|}} \prod_{i \in I} N_i\right),
\)
and thus $f(K) \asymp_d f(\mathbf{N})$ in the special case where $K$ is an axis-aligned box. 

Similarly to~\Cref{thm:main}, \Cref{thm:generalbodiesthm} below bounds the discrepancy of arithmetic sequences in general $d$-dimensional convex bodies. 
\begin{restatable}[APs in Convex Bodies]{theorem}{generalbodiesthm}
\label{thm:generalbodiesthm}%
    Let $K$ be any $d$-dimensional convex body, $\mathbf{v}\in \R^d$ be a vector, and let $\Omega_{\mathbf{v}+K}$ be all the integer points in $\mathbf{v}+K$. If $\AA_{\mathbf{v} + K}$ is  the family of all arithmetic progression defined on the universe $\Omega_{\mathbf{v} + K}$, then
    \begin{equation}\label{eq:generalbodies-lb}
        f(K) \lesssim_d \sup_{\mathbf{v}\in \R^d} \disc\left(\AA_{\mathbf{v} + K}\right)
    \end{equation}
    and, for every $\mathbf{v} \in \mathbb{R}^d$, 
    \begin{equation}\label{eq:generalbodies-ub}
        \disc\left(\AA_{\mathbf{v} + K}\right) \lesssim_d \sqrt{\log (1+|\Omega_{\mathbf{v} + K}|)}\cdot f\left(K\right).
    \end{equation}
    where $f$ is defined in equation~\eqref{eq:ub-body}. 
    
    Moreover, if $K=-K$, i.e., $K$ is symmetric around the origin, then for $\AA_K \eqdef \AA_{\vec{0} + K}$,
    \begin{equation}\label{eq:generalbodies-sym}
        f(K) \lesssim_d \disc\left(\AA_{K}\right) \lesssim_d
        \sqrt{\log (1+|\Omega_K|)}\cdot f\left(K\right).
    \end{equation}

\end{restatable}

We note here that our proof of  \Cref{thm:generalbodiesthm} is not constructive, in the sense that we cannot, at present, guarantee an algorithm that finds a coloring achieving the claimed bound in time polynomial in the number of integer points in $K$. In addition to technical issues, like how the convex body $K$ would be given to an algorithm, and how the integer points inside it can be enumerated, this is also due to our reliance on a result of Banaszczyk~\cite{bana12} that presently has no algorithmic counterpart. This is the same reason why the best known bound for the discrepancy of axis-aligned boxes~\cite{nikolov-boxes}, and the best bounds for the Steinitz problem~\cite{bana12} are not constructive. 

Theorem~\ref{thm:generalbodiesthm} shows how the discrepancy of arithmetic progressions inside a convex body $K$ can be derived from the geometric properties of $K$. To illustrate its use, we state a simple corollary that determines how $\disc(\AA_{rK})$ grows as $r\to \infty$. 

\begin{corollary}
    Let $K$ be a $d$-dimensional convex body of volume $1$. Define $\AA_{\vec{v} + K}$ as in Theorem~\ref{thm:generalbodiesthm}. Then,
    \[
    \limsup_{r\to\infty} \frac{\sup_{\vec{v} \in \R^d} \disc(\AA_{\vec{v}+rK})}{r^{\frac{d}{2(d+1)}}\sqrt{\log r}} \lesssim_d 1; \hspace{2em} \liminf_{r\to\infty} \frac{\sup_{\vec{v} \in \R^d} \disc(\AA_{\vec{v}+rK})}{r^{\frac{d}{2(d+1)}}} \gtrsim_d 1.
    \]
    If, in addition, $K$ is centrally symmetric, then 
    \[\limsup_{r\to\infty} \frac{\disc\left(\AA_{rK}\right)}{r^{\frac{d}{2(d+1)}}\sqrt{\log r}} \lesssim_d 1;\hspace{2em} \liminf_{r\rightarrow\infty}\frac{\disc\left(\AA_{rK}\right)}{r^{\frac{d}{2(d+1)}}} \gtrsim_d 1.\]
\end{corollary}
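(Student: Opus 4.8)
The plan is to deduce the corollary from \Cref{thm:generalbodiesthm} by determining the asymptotics of $f(rK)$ and of $|\Omega_{\vec{v}+rK}|$ as $r\to\infty$. Both the upper and lower bounds in \eqref{eq:generalbodies}, read for the body $rK$, are a $d$‑dependent constant times $f(rK)$ --- with the upper bound additionally carrying a factor $\sqrt{\log(1+|\Omega_{\vec{v}+rK}|)}$. So it suffices to show that, once $r$ exceeds some threshold $r_0(K)$, we have $f(rK)\asymp_d r^{d/(2(d+1))}$ and $\log(1+|\Omega_{\vec{v}+rK}|)\asymp_d \log r$ uniformly over $\vec{v}\in\R^d$; the two displayed limit inequalities, and their centrally symmetric counterparts via \eqref{eq:generalbodies-sym}, then follow at once (note the $\liminf$ bounds only use $f(rK)\gtrsim_d r^{d/(2(d+1))}$ together with the left inequalities of \eqref{eq:generalbodies} and \eqref{eq:generalbodies-sym}).

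The first and main step is the computation of $f(rK)$. Since $(rK)-(rK)=r(K-K)$, we get $\zeta_{(rK)-(rK)}(t)=|\Z^d\cap tr(K-K)|=\zeta_{K-K}(tr)$, so by \eqref{eq:ub-body}, $f(rK)=\sqrt{s_r^*}$ where $s_r^*=\inf\{s:\,s\ge\zeta_{K-K}(r/s)\}$. As $\zeta_{K-K}$ is nondecreasing, the set here is an up‑set, and $s_r^*$ is the crossover of the increasing map $s\mapsto s$ and the decreasing map $s\mapsto\zeta_{K-K}(r/s)$. I would use the standard lattice‑point asymptotic $\zeta_{K-K}(u)=\mathrm{vol}(K-K)\,u^d+O_K(u^{d-1})$ --- so $\zeta_{K-K}(u)$ is within a factor $2$ of $\mathrm{vol}(K-K)\,u^d$ once $u$ exceeds a $K$‑dependent threshold --- together with $1\le\mathrm{vol}(K-K)\le\binom{2d}{d}$, the lower bound being trivial (as $K-K$ contains a translate of $K$) and the upper bound being the Rogers--Shephard inequality. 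Substituting $\zeta_{K-K}(r/s)\asymp_d(r/s)^d$ into $s=\zeta_{K-K}(r/s)$ gives $s^{d+1}\asymp_d r^d$, i.e. $s_r^*\asymp_d r^{d/(d+1)}$, with constants depending only on $d$; crucially $r/s_r^*\asymp_d r^{1/(d+1)}\to\infty$, which confirms that the crossover occurs in the range where the asymptotic is valid once $r\ge r_0(K)$. Hence $f(rK)=\sqrt{s_r^*}\asymp_d r^{d/(2(d+1))}$ for all large $r$.

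For the ground set, I would use that for a fixed convex body $|\Z^d\cap(\vec{v}+rK)|=r^d\,\mathrm{vol}(K)+O_K(r^{d-1})=r^d+O_K(r^{d-1})$, where the error term depends only on the shape $rK$ and not on the translation; thus this holds uniformly over $\vec{v}$, and $\log(1+|\Omega_{\vec{v}+rK}|)=d\log r+O_K(1)\asymp_d\log r$ for large $r$, again uniformly in $\vec{v}$.

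Finally I would combine. The upper bound in \eqref{eq:generalbodies} applied to $rK$ gives, for every $\vec{v}$, $\disc(\AA_{\vec{v}+rK})\lesssim_d\sqrt{\log(1+|\Omega_{\vec{v}+rK}|)}\,f(rK)\lesssim_d\sqrt{\log r}\cdot r^{d/(2(d+1))}$ once $r$ is large; taking the supremum over $\vec{v}$, dividing by $r^{d/(2(d+1))}\sqrt{\log r}$, and letting $r\to\infty$ yields the first $\limsup$ inequality. The left inequality of \eqref{eq:generalbodies}, $f(rK)\lesssim_d\sup_{\vec{v}}\disc(\AA_{\vec{v}+rK})$, combined with $f(rK)\gtrsim_d r^{d/(2(d+1))}$, gives the $\liminf$ inequality. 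The centrally symmetric statements are identical, using \eqref{eq:generalbodies-sym} with $\vec{v}=\vec{0}$. The one place requiring real care is the computation of $f(rK)$: checking that the crossover value $s_r^*$ genuinely falls in the regime governed by the lattice‑point asymptotics and that the constants there are controlled by $d$ alone --- which is exactly where $\mathrm{vol}(K)=1$ and Rogers--Shephard enter --- while keeping every estimate on $|\Omega_{\vec{v}+rK}|$ uniform over $\vec{v}$.
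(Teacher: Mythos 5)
Your proposal is correct and follows essentially the same route as the paper: both reduce the corollary to showing $f(rK)\asymp_d r^{d/(2(d+1))}$ for large $r$ by combining the lattice-point count $\zeta_{K-K}(u)\asymp_d u^d$ (via the volume asymptotic and Rogers--Shephard) with the crossover equation $s\asymp_d (r/s)^d$, and then invoke Theorem~\ref{thm:generalbodiesthm}. The only cosmetic difference is that the paper bounds $|\Omega_{\vec{v}+rK}|$ uniformly in $\vec{v}$ via the first inequality of Lemma~\ref{lem:zeta-maxshift} rather than appealing to a translation-uniform lattice-point asymptotic, but only the upper bound on $|\Omega_{\vec{v}+rK}|$ is needed and both arguments supply it.
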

\begin{proof}
    Let $\vol_d(\cdot)$ be the $d$-dimensional Lebesgue measure. Then, since $K-K$ is convex, and therefore, Jordan measurable, we have~\cite[Section 7.2]{Gruber-convgeom}
    \begin{equation}\label{eq:zeta-limit}
    %\lim_{r\to \infty} \frac{|\Z^d \cap (\vec{v} + rK)|}{r^d} = \vol_d(K) = 1;
    %\hspace{2.5em}
    \lim_{r\to \infty} \frac{\zeta_{K-K}(r)}{r^d} = \vol_d(K-K).
    \end{equation}
    By the the Brunn-Minkowski and Rogers-Shephard inequalities~\cite{RogersShephard57},  we have $2^d \le \vol_d(K-K) \le 4^d$. Therefore, for any $\vec{v} \in \R^d$, and any large enough $r$ we can assume that $|\Omega_{\vec{v}+rK}| \le \zeta_{K-K}(r) \asymp_d r^d$ (see Lemma~\ref{lem:zeta-maxshift} for the first inequality). Moreover, for a suitable constant $C_d$ depending only on $d$, and for all large enough $r$, choosing $s = (C_dr^d)^{\frac{1}{d+1}}$ gives us
    \[
    \zeta_{K-K}\left(\frac{r}{s}\right) \le \frac{C_d r^d}{s^d} = s.
    \]
    Therefore, $f(rK) \lesssim_d r^{\frac{d}{2(d+1)}}$. Similarly, for a suitable constant $c_d$ depending only on $d$, and all large enough $r$, choosing $s < (c_dr^d)^{\frac{1}{d+1}}$ gives us
    \[
    \zeta_{K-K}\left(\frac{r}{s}\right) \ge \frac{c_d r^d}{s^d} > s,
    \]
    implying that $f(rK) \gtrsim_d r^{\frac{d}{2(d+1)}}$, as well. So, for all large enough $r$, we have the estimate $f(rK) \asymp_d r^{\frac{d}{2(d+1)}}$, and the corollary follows from~\Cref{thm:generalbodiesthm}.
\end{proof}

More refined bounds on the growth rate of $\zeta_{K-K}(t)$ than the limit~\eqref{eq:zeta-limit} would automatically lead to sharper bounds on the discrepancy.

\section{Background}
Our proof will require several facts about factorization norms, which we will recount in~\Cref{sec:gamma2norm}, as well a variant of discrepancy which we define now.   

First, recall the definition of the discrepancy of $(\Omega, \mathcal{S})$ given in equation~\ref{eq:disc}
\begin{equation*}
    \disc\left(\mathcal{S}\right) = \min_{\chi: \Omega \to \{-1, 1\}}\max_{S \in \SS}\left|\sum_{\omega \in S}\chi(\omega)\right|.
\end{equation*}
Note that we can give an equivalent definition of $\disc\left(\mathcal{S}\right)$ with respect to the \emph{incidence} matrix $\mathbf{A}_{\mathcal{S}}$ of $(\Omega, \mathcal{S})$. Taking $\Omega = [n]$ and $\mathcal{S} = \{S_1, \ldots, S_m\}$ for simplicity, $\mathbf{A}_{\mathcal{S}}$ is the $m \times n$ zero-one matrix whose $(i,j)$ entry is equal to one if and only if $j \in S_i$ and is zero otherwise. It follows that $\disc(\mathcal{S})$ is equal to 
\begin{equation}\label{eq:disc-alt}
    \disc\left(\mathbf{A}_{\mathcal{S}}\right) = \min_{\mathbf{x} \in \{-1, +1\}^{n}}\left\|\mathbf{A}_{\mathcal{S}}\mathbf{x}\right\|_{\infty}.
\end{equation}

Next, let $\sigma: \Omega \to [|\Omega|]$ be a bijection. We think of $\sigma$ as giving an ordering to $\Omega$. The \emph{prefix discrepancy} of $\mathcal{S}$ with respect to $\sigma$ as is defined as
\[
\pdisc(\mathcal{S},\sigma) \eqdef
\min_{\chi:\Omega\to\{-1,+1\}}
\max_{j = 1}^{n}
\max_{S \in \mathcal{S}} \left|\sum_{\omega \in S: \sigma(\omega) \le j} \chi(\omega)\right|.
\]
Again $\pdisc(\mathcal{S},\sigma)$ has an equivalent definition $\pdisc\left(\mathbf{A}_{\mathcal{S}}\right)$ with respect to the incidence matrix $\mathbf{A}_{\mathcal{S}}$ of $\mathcal{S}$ with columns ordered according to $\sigma$: the first column indicates the incidences of $\sigma^{-1}(1)$, the second of $\sigma^{-1}(2)$, etc. Next, we can define the \emph{prefix discrepancy} of $\mathbf{A}_{\mathcal{S}}$ by 
\begin{equation}\label{eq:pdisc}
    \pdisc\left(\mathbf{A}_{\mathcal{S}}\right) \eqdef \min_{\mathbf{x} \in \{-1,+1\}^n} \max_{j = 1}^{n} \left\|\sum_{i=1}^j x_i \mathbf{a}_i\right\|_\infty,
\end{equation}
where $\mathbf{a}_i$ is the $i$-th column of $\mathbf{A}_{\mathcal{S}}$. 

\subsection{\texorpdfstring{$\gamma_2$}{gamma-2} Norm}\label{sec:gamma2norm}
The $\gamma_2$ factorization norm of a matrix $\mathbf{A}$, denoted  $\gamma_2(\mathbf{A})$, equals 
\[\gamma_2\left(\mathbf{A}\right) = \inf\{\|\mathbf{L}\|_{2\to\infty} \|\mathbf{R}\|_{1\to 2}: \mathbf{LR} = \mathbf{A}\}.\]
Here, $\|\mathbf{L}\|_{2\to\infty}$ is the $\ell_2 \to \ell_\infty$ operator norm, and equals the maximum $\ell_2$ norm of a row of $\vec{L}$, and $\|\vec{R}\|_{1\to 2}$ is the $\ell_1 \to \ell_2$ operator norm, and equals the maximum $\ell_2$ norm of a column of $\vec{R}$.

We extend this definition to set systems by writing $\gamma_2(\SS) = \gamma_2\left(\mathbf{A}_{\mathcal{S}}\right)$, where $\mathbf{A}_{\mathcal{S}}$ is, as above, the incidence matrix of $\SS$. Notice that $\gamma_2(\mathcal{S})$ is invariant under isomorphism and, similarly, $\gamma_2\left(\mathbf{A}_{\mathcal{S}}\right)$ is invariant under permuting the rows or columns of $\mathbf{A}_{\mathcal{S}}$.

We recall a few basic properties of the $\gamma_2$ norm. See Section~7~of~\cite{matouvsek2020factorization} for the proofs of these properties.

\begin{lemma}{\cite{matouvsek2020factorization}.}\label{lem:gamma2}
    The $\gamma_2$ norm satisfies the following properties.
    \begin{itemize} 
        \item (Triangle Inequality) For any matrices $\vec{A}$ and $\vec{B}$ of equal dimensions,
        \begin{equation}\label{eq:gamma2-triangle}
            \gamma_2\left(\mathbf{A} + \mathbf{B}\right) \le \gamma_2\left(\mathbf{A}\right) + \gamma_2\left(\mathbf{B}\right)
        \end{equation}
        \item (Union Property) For any two set systems $\SS$ and $\SS'$ on the same universe,
        \begin{equation}\label{eq:gamma2-union}
            \gamma_2(\SS \cup \SS')^2 \le \gamma_2(\SS)^2 + \gamma_2(\SS')^2.
        \end{equation}
        \item (Degree Bound) If $\mathcal{S}$ is a set system such that each element of the universe is contained in at most $t$ sets, then 
        \begin{equation}\label{eq:gamma2-degreebound}
            \gamma_2(\mathcal{S}) \le \sqrt{t}.
        \end{equation}
        \item (Size Bound) If all sets in a set system $\mathcal{S}$ have size at most $t$, then 
        \begin{equation}\label{eq:gamma2-sizebound}
            \gamma_2(\mathcal{S}) \le \sqrt{t}.
        \end{equation}
        \item (Disjoint Supports) If $\mathcal{S}$ and $\mathcal{S}'$ are set systems on disjoint universes, then 
        \begin{equation}
            \gamma_2\left(\mathcal{S} \cup \mathcal{S}'\right) = \max\left(\gamma_2\left(\mathcal{S}\right), \gamma_2\left(\mathcal{S}'\right)\right).
        \end{equation}
    \end{itemize}
\end{lemma}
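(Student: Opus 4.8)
The plan is to prove all five properties by writing down explicit factorizations of the relevant incidence matrices, exploiting the scale invariance of the factorization: if $\mathbf{A}=\mathbf{L}\mathbf{R}$ then also $\mathbf{A}=(c\mathbf{L})(\mathbf{R}/c)$ for every $c>0$. Choosing $c=\sqrt{\|\mathbf{R}\|_{1\to 2}/\|\mathbf{L}\|_{2\to\infty}}$ lets us assume an optimal factorization is \emph{balanced}, i.e.\ $\|\mathbf{L}\|_{2\to\infty}=\|\mathbf{R}\|_{1\to 2}=\sqrt{\gamma_2(\mathbf{A})}$, while choosing $c=\|\mathbf{L}\|_{2\to\infty}^{-1}$ lets us instead assume $\|\mathbf{L}\|_{2\to\infty}=1$ and $\|\mathbf{R}\|_{1\to2}=\gamma_2(\mathbf{A})$. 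Throughout I will use the elementary identities $\|\mathbf{L}\|_{2\to\infty}=\max_i\|\text{row }i\text{ of }\mathbf{L}\|_2$ and $\|\mathbf{R}\|_{1\to2}=\max_j\|\text{column }j\text{ of }\mathbf{R}\|_2$, together with the invariance of $\gamma_2$ under transposition.

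For the triangle inequality, fix balanced optimal factorizations $\mathbf{A}=\mathbf{L}_A\mathbf{R}_A$ and $\mathbf{B}=\mathbf{L}_B\mathbf{R}_B$, and factor $\mathbf{A}+\mathbf{B}$ as the horizontal block $[\,\mathbf{L}_A\mid\mathbf{L}_B\,]$ times the vertical block obtained by stacking $\mathbf{R}_A$ on top of $\mathbf{R}_B$. Each row of $[\,\mathbf{L}_A\mid\mathbf{L}_B\,]$ is a concatenation of a row of $\mathbf{L}_A$ and a row of $\mathbf{L}_B$, so its squared $\ell_2$-norm is at most $\gamma_2(\mathbf{A})+\gamma_2(\mathbf{B})$, and symmetrically each column of the stacked right factor has squared $\ell_2$-norm at most $\gamma_2(\mathbf{A})+\gamma_2(\mathbf{B})$; multiplying the two bounds yields $\gamma_2(\mathbf{A}+\mathbf{B})\le\gamma_2(\mathbf{A})+\gamma_2(\mathbf{B})$. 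For the union property note that $\mathbf{A}_{\SS\cup\SS'}$ is $\mathbf{A}_\SS$ stacked on $\mathbf{A}_{\SS'}$ (same columns). Now normalize the two factorizations so that $\|\mathbf{L}_A\|_{2\to\infty}=\|\mathbf{L}_B\|_{2\to\infty}=1$, and factor $\mathbf{A}_{\SS\cup\SS'}$ as the block-diagonal matrix $\operatorname{diag}(\mathbf{L}_A,\mathbf{L}_B)$ times the stack of $\mathbf{R}_A$ over $\mathbf{R}_B$: the left factor has $\|\cdot\|_{2\to\infty}=1$ (each row lies in a single block), while the right factor has $\|\cdot\|_{1\to2}^2\le\gamma_2(\SS)^2+\gamma_2(\SS')^2$, giving the claimed squared inequality. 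The degree bound follows from $\mathbf{A}_\SS=\mathbf{I}\,\mathbf{A}_\SS$: here $\|\mathbf{I}\|_{2\to\infty}=1$ and $\|\mathbf{A}_\SS\|_{1\to2}$ equals the square root of the largest column sum, which is at most $\sqrt{t}$. The size bound is the same statement applied to $\mathbf{A}_\SS^\top$ (or directly via $\mathbf{A}_\SS=\mathbf{A}_\SS\,\mathbf{I}$).

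For disjoint supports, $\mathbf{A}_{\SS\cup\SS'}=\operatorname{diag}(\mathbf{A}_\SS,\mathbf{A}_{\SS'})$. The upper bound comes from the block-diagonal factorization $\operatorname{diag}(\mathbf{L}_A,\mathbf{L}_B)\cdot\operatorname{diag}(\mathbf{R}_A,\mathbf{R}_B)$ with each block balanced; since every row of the left factor and every column of the right factor lives in a single block, $\|\cdot\|_{2\to\infty}=\max(\sqrt{\gamma_2(\SS)},\sqrt{\gamma_2(\SS')})$ and likewise for the right factor, so the product is $\max(\gamma_2(\SS),\gamma_2(\SS'))$. The matching lower bound uses one extra fact: $\gamma_2$ does not increase under passing to a submatrix (deleting rows and columns), because restricting any factorization $\mathbf{A}'=\mathbf{L}\mathbf{R}$ to the kept rows of $\mathbf{L}$ and kept columns of $\mathbf{R}$ can only decrease $\|\mathbf{L}\|_{2\to\infty}$ and $\|\mathbf{R}\|_{1\to2}$; applying this to the two diagonal blocks gives $\gamma_2(\mathbf{A}_{\SS\cup\SS'})\ge\max(\gamma_2(\SS),\gamma_2(\SS'))$. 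None of these steps is a genuine obstacle; the only thing that requires care is selecting the correct normalization of the factorizations in each case — symmetric balancing for the triangle inequality and the disjoint-supports upper bound, but the asymmetric normalization $\|\mathbf{L}\|_{2\to\infty}=1$ for the union bound — and, for the disjoint-supports lower bound, recording the submatrix monotonicity of $\gamma_2$.
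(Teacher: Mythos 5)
Your proof is correct, and it follows the standard factorization-norm arguments; the paper itself gives no proof of this lemma, deferring to Section~7 of Matou\v{s}ek--Nikolov--Talwar, whose proofs are exactly these block factorizations (concatenation for the triangle inequality, block-diagonal left factor for the union bound, $\mathbf{I}\cdot\mathbf{A}_{\mathcal{S}}$ and its transpose for the degree and size bounds, and submatrix monotonicity for the disjoint-supports lower bound). Your care with the normalization choices (balanced versus $\|\mathbf{L}\|_{2\to\infty}=1$) is exactly the point where these arguments could otherwise go wrong, and you handle it correctly.
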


We also state a property that is an immediate consequence of the triangle inequality when applied to set systems. A slightly less general variant was also stated as Property E in Section 7 of~\cite{matouvsek2020factorization}.
\begin{lemma}\label{lem:triangle-ss}
Suppose that $\SS$, $\SS_1$, $\SS_2$ are set systems on the same universe, so that each set $S$ in $\SS$ can be written as the disjoint union of sets $S_1 \in \SS_1$ and $S_2 \in \SS_2$. Then $\gamma_2(\SS) \le \gamma_2(\SS_1) + \gamma_2(\SS_2)$.

Similarly, suppose each set $S$ in $\SS$ can be written as $S_1 \setminus S_2$ where $S_1 \in \SS_1$, $S_2 \in \SS_2$, and $S_2 \subseteq S_1$. Then $\gamma_2(\SS) \le \gamma_2(\SS_1) + \gamma_2(\SS_2)$.
\end{lemma}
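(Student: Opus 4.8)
The plan is to derive both statements directly from the triangle inequality for $\gamma_2$ (equation~\eqref{eq:gamma2-triangle}), together with the elementary observation that deleting, repeating, or permuting the rows of a matrix cannot increase its $\gamma_2$ norm.

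First I would set up notation: fix the common universe, say $\Omega=[n]$, so that every set is identified with its $0/1$ indicator vector in $\{0,1\}^n$, and order the rows of $\vec{A}_\SS$ by the sets $S\in\SS$. For each $S\in\SS$ fix one decomposition $S=S_1\cup S_2$ with $S_1\in\SS_1$, $S_2\in\SS_2$, and the union disjoint; then the indicator vector of $S$ is the sum of the indicator vectors of $S_1$ and $S_2$. Let $\vec{A}$ be the matrix whose rows are exactly these chosen indicator vectors of the $S_1$'s (one row per $S\in\SS$, in the same order as in $\vec{A}_\SS$), and let $\vec{B}$ be built analogously from the chosen $S_2$'s. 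By construction $\vec{A}_\SS=\vec{A}+\vec{B}$.

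Next I would show $\gamma_2(\vec{A})\le\gamma_2(\SS_1)$. The rows of $\vec{A}$ form a sub-multiset of the rows of $\vec{A}_{\SS_1}$: some rows of $\vec{A}_{\SS_1}$ may be repeated (several $S$ can share the same $S_1$) and some may be omitted. If $\vec{A}_{\SS_1}=\vec{L}\vec{R}$ is an optimal factorization, then selecting and repeating the corresponding rows of $\vec{L}$ (keeping $\vec{R}$ fixed) gives a factorization of $\vec{A}$ whose left factor has maximum row $\ell_2$-norm no larger than that of $\vec{L}$; hence $\gamma_2(\vec{A})\le\|\vec{L}\|_{2\to\infty}\|\vec{R}\|_{1\to 2}=\gamma_2(\SS_1)$. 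The identical argument gives $\gamma_2(\vec{B})\le\gamma_2(\SS_2)$. Applying the triangle inequality then yields $\gamma_2(\SS)=\gamma_2(\vec{A}+\vec{B})\le\gamma_2(\vec{A})+\gamma_2(\vec{B})\le\gamma_2(\SS_1)+\gamma_2(\SS_2)$.

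For the second statement I would argue identically, writing each $S\in\SS$ as $S_1\setminus S_2$ with $S_2\subseteq S_1$, so that the indicator of $S$ is the indicator of $S_1$ minus that of $S_2$, giving $\vec{A}_\SS=\vec{A}-\vec{B}$ for matrices $\vec{A},\vec{B}$ obtained from $\vec{A}_{\SS_1},\vec{A}_{\SS_2}$ as above. Since negating a matrix leaves every factorization unchanged, $\gamma_2(-\vec{B})=\gamma_2(\vec{B})$, and the triangle inequality again gives $\gamma_2(\SS)\le\gamma_2(\vec{A})+\gamma_2(\vec{B})\le\gamma_2(\SS_1)+\gamma_2(\SS_2)$. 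I do not expect a genuine obstacle here; the only point needing a moment's care is the monotonicity of $\gamma_2$ under passing to a sub-multiset of rows, which is precisely what lets the chosen decomposition be non-injective in $S_1$ or $S_2$.
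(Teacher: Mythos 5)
Your proof is correct and follows essentially the same route as the paper: both write the incidence matrix of $\SS$ as a sum (resp.\ difference) of matrices obtained from $\vec{A}_{\SS_1}$ and $\vec{A}_{\SS_2}$ by re-arranging, duplicating, and deleting rows, and then apply the triangle inequality for $\gamma_2$. Your writeup is slightly more explicit than the paper's about why passing to a sub-multiset of rows cannot increase $\gamma_2$, but this is the same argument.
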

\begin{proof}
    Let $\vec{A},\vec{A}_1,\vec{A}_2$ be the incidence matrices of, respectively, $\SS, \SS_1,\SS_2$. Under the first assumption, we can re-arrange, and, if necessary, duplicate the rows of the matrices so that $\vec{A}=\vec{A}_1+\vec{A}_2$, and then the lemma follows from the triangle inequality property of $\gamma_2$, and the obvious property that duplicating and re-arranging rows does not change the $\gamma_2$ norm of a matrix. The proof under the second assumption is analogous, but we get the equation $\vec{A}=\vec{A}_1-\vec{A}_2$ instead.
\end{proof}

The relevance of the $\gamma_2$ norm to discrepancy theory stems from the following result, which is a constructive version of Theorem 3.4~from~\cite{matouvsek2020factorization}. 
\begin{theorem}[\cite{GSwalk}]\label{thm:gswalk}
    There exists a randomized polynomial time algorithm that, for any $m\times n$ matrix $\vec{A}$ with rational entries, computes in time polynomial in $m,n$, and the bit complexity of $\vec{A}$ a coloring $\vec{x} \in \{-1,+1\}^n$ such that, with high probability, 
    \[
    \|\vec{A}\vec{x}\|_\infty \lesssim \sqrt{\log 2m}\, \gamma_2(\vec{A}).
    \]
    In particular, there exists a randomized algorithm that, for any set system $(\Omega,\SS)$, computes in time polynomial in $|\Omega|$ and $|\SS|$, a coloring $\chi:\Omega\to\{-1,+1\}$ so that, with high probability,
    \[
    \disc(\SS,\chi) \lesssim \sqrt{\log 2|\SS|}\, \gamma_2(\vec{A}).
    \]
\end{theorem}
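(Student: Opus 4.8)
The plan is to combine two ingredients: the Gram--Schmidt walk algorithm~\cite{GSwalk}, and the fact that an essentially optimal $\gamma_2$-factorization of a rational matrix can be computed in polynomial time by semidefinite programming.

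First I would compute matrices $\vec{L}$ and $\vec{R}$ with $\vec{L}\vec{R}=\vec{A}$ and $\|\vec{L}\|_{2\to\infty}\cdot\|\vec{R}\|_{1\to 2}\le(1+o(1))\,\gamma_2(\vec{A})$. This is possible because $\gamma_2(\vec{A})$ is the optimal value of a semidefinite program, a near-optimal Gram-matrix solution of which can be found with the ellipsoid method in time polynomial in $m$, $n$, and the bit complexity of $\vec{A}$; the factors $\vec{L},\vec{R}$ are then extracted by a Cholesky-type decomposition. (The only thing to verify here is the standard bit-complexity bookkeeping, namely that a near-optimal factorization of polynomially bounded bit length exists.) Rescaling $\vec{L}\leftarrow c\vec{L}$ and $\vec{R}\leftarrow c^{-1}\vec{R}$ for a suitable $c>0$, I may assume $\|\vec{R}\|_{1\to 2}\le 1$, i.e.\ every column $\vec{r}_i$ of $\vec{R}$ has $\|\vec{r}_i\|_2\le 1$, while $\|\vec{L}\|_{2\to\infty}\le(1+o(1))\,\gamma_2(\vec{A})$, i.e.\ every row $\ell_j$ of $\vec{L}$ has $\|\ell_j\|_2\le(1+o(1))\,\gamma_2(\vec{A})$.

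Next I would run the Gram--Schmidt walk on the columns $\vec{r}_1,\dots,\vec{r}_n$ of $\vec{R}$. That algorithm runs in polynomial time, and, since each $\vec{r}_i$ has $\ell_2$-norm at most $1$, it outputs a random coloring $\vec{x}\in\{-1,+1\}^n$ with the following subgaussianity guarantee: the random vector $\vec{y}\eqdef\vec{R}\vec{x}=\sum_i x_i\vec{r}_i$ satisfies $\mathbb{E}\exp(\langle\theta,\vec{y}\rangle)\le\exp(\|\theta\|_2^2/2)$ for every fixed vector $\theta$. Applying this with $\theta=\pm\ell_j$ shows that each coordinate $(\vec{A}\vec{x})_j=\langle\ell_j,\vec{y}\rangle$ of $\vec{A}\vec{x}=\vec{L}\vec{y}$ is a mean-zero subgaussian random variable with variance proxy $\|\ell_j\|_2^2\le(1+o(1))^2\gamma_2(\vec{A})^2$, and hence $\Pr[\,|(\vec{A}\vec{x})_j|>s\,]\le 2\exp(-c\,s^2/\gamma_2(\vec{A})^2)$ for an absolute constant $c>0$ and all large enough $m$.

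Finally I would take $s=C\sqrt{\log 2m}\,\gamma_2(\vec{A})$ for a suitable absolute constant $C$, and union bound over the $2m$ events $\{(\vec{A}\vec{x})_j>s\}$ and $\{(\vec{A}\vec{x})_j<-s\}$, $j=1,\dots,m$; this gives $\|\vec{A}\vec{x}\|_\infty\le s\lesssim\sqrt{\log 2m}\,\gamma_2(\vec{A})$ with high probability. The ``in particular'' statement then follows by specializing to $\vec{A}=\vec{A}_{\SS}$, for which $m=|\SS|$, $n=|\Omega|$, the entries lie in $\{0,1\}$ (so the bit complexity is trivially polynomial), and $\disc(\SS,\chi)=\|\vec{A}_{\SS}\vec{x}\|_\infty$ for the coloring $\chi$ corresponding to $\vec{x}$. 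The main obstacle --- indeed, essentially the only nontrivial input --- is the subgaussianity of the Gram--Schmidt walk iterate, which is exactly the deep result imported from~\cite{GSwalk}; everything else is the semidefinite-programming bit-complexity bookkeeping and a routine union bound.
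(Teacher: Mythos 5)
Your proposal is correct and is precisely the intended derivation: the paper states this result as an import from~\cite{GSwalk} without proof, and the standard way to obtain it is exactly as you describe --- compute a near-optimal $\gamma_2$-factorization by semidefinite programming, run the Gram--Schmidt walk on the unit-norm columns of $\vec{R}$, and combine the subgaussianity guarantee of the walk with a union bound over the $m$ rows of $\vec{L}$. The only cosmetic imprecision is that the tail bound for each coordinate follows from applying the moment-generating-function bound with $\theta=\lambda\ell_j$ and optimizing over $\lambda$, not just with $\theta=\pm\ell_j$, but this is routine and does not affect correctness.
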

It is worth noting here that $\gamma_2(\vec{A})$ and $\gamma_2(\SS)$ also gives lower bounds on the hereditary discrepancy of $\vec{A}$ and $\SS$, respectively~\cite{matouvsek2020factorization}. We will not use this fact here.

We also need the following result of Banaszczyk.%{Lemma 2.5}
\begin{theorem}{\cite[Lemma 2.5]{bana12}}\label{thm:bana-ss}
    Suppose $\mathbf{v}_1, \ldots, \mathbf{v}_n\in \R^m$ satisfy $\|\mathbf{v}_i\|_2 \le 1$ for all $i \in \{1, \ldots, n\}$.
    For any closed symmetric convex set $K\subseteq \R^m$ with Gaussian measure at least $1-\frac{1}{2n}$, there exists a coloring $\mathbf{x} \in \{-1,+1\}^n$ such that, for each $j \in \{1, \ldots, n\}$, $\sum_{i = 1}^j x_i \mathbf{v}_i \in 5K$.
\end{theorem}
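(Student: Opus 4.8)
The plan is to reconstruct Banaszczyk's argument. In isolation the case $j=n$ is Banaszczyk's classical vector-balancing theorem; the content of the statement is that the \emph{same} constant survives when one insists that \emph{every} prefix sum, not just the last one, lies in $5K$, and the price for this is the stronger hypothesis $\mu(K)\ge 1-\frac{1}{2n}$, which is exactly the slack a union bound over the $n$ prefixes needs.

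The engine is the log-concavity of Gaussian measure. Write $\mu=\mu_m$ for the standard Gaussian measure on $\R^m$, and for a symmetric convex body $U$ put $g_U(\mathbf{x})\eqdef\mu(U-\mathbf{x})=\mu(U+\mathbf{x})$. Since $\mathbf{1}_U$ is log-concave and convolution with a standard Gaussian bounds the log-Hessian of any log-concave function by the identity, $g_U$ is log-concave with $-\nabla^2\log g_U\preceq I_m$. Consequently, for every $\mathbf{p}\in\R^m$ and every $\mathbf{v}$ with $\|\mathbf{v}\|_2\le 1$,
\[
g_U(\mathbf{p})^2\,e^{-\|\mathbf{v}\|_2^2}\ \le\ g_U(\mathbf{p}+\mathbf{v})\,g_U(\mathbf{p}-\mathbf{v})\ \le\ g_U(\mathbf{p})^2 ,
\]
where the right inequality is plain log-concavity and evenness of $g_U$ and the left one is the second-order estimate coming from $-\nabla^2\log g_U\preceq I_m$; in particular some sign $\sigma\in\{-1,+1\}$ satisfies $g_U(\mathbf{p}+\sigma\mathbf{v})\ge e^{-1/2}g_U(\mathbf{p})$. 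I would pair this with the elementary ``point versus body'' dictionary: if $\mathbf{x}\notin tK$ with $t\ge 2$, then $K-\mathbf{x}\subseteq\R^m\setminus(t-1)K$, hence $g_K(\mathbf{x})\le 1-\mu\big((t-1)K\big)\le 1-\mu(K)$. Under $\mu(K)\ge 1-\frac{1}{2n}$ this says that $\mathbf{x}\notin 5K$ forces $g_K(\mathbf{x})\le\frac{1}{2n}$, so it is enough to choose the signs so that the potential $g_K(\mathbf{s}_j)$ stays strictly above $\frac{1}{2n}$ for every prefix sum $\mathbf{s}_j=\sum_{i\le j}x_i\mathbf{v}_i$, starting from $g_K(\mathbf{s}_0)=\mu(K)$.

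The heart of the proof is an adaptive rule for choosing $x_1,\ldots,x_n$ that prevents this potential from collapsing, and here I would follow Banaszczyk's induction on $n$: peel off one vector and use the two-sided estimate to show that for an appropriate choice of its sign the residual instance is again of the same shape, with a symmetric convex body whose Gaussian measure has decreased by at most what can be charged to that step. It is essential to exploit the genuine two-sided estimate rather than the naive ``both signs must work'' reduction, which would replace $5K$ by $5K\cap\bigcap(5K\pm\mathbf{v}_i)$ and blow the required dilation up by a factor polynomial in $n$. The inductive step balances two regimes — when $\mathbf{s}_{i-1}$ lies far out one takes the sign that drifts back toward the origin, whereas near the origin the loss is only $O(\|\mathbf{v}_i\|_2^2)$ — and one invokes Banaszczyk's non-prefix vector-balancing theorem (which needs only $\mu(K)\ge\frac12$) together with Gaussian concentration for symmetric convex bodies, e.g.\ via Ehrhard's inequality, to absorb the lower-order losses into an absolute constant.

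The step I expect to be the genuine obstacle is precisely this calibration: keeping the constant independent of $n$. A greedy rule alone only gives $g_K(\mathbf{s}_j)\ge \mu(K)\,e^{-j/2}$, which is worthless once $j\gg\log n$, and the crude inductive reduction degrades the dilation polynomially in $n$; getting the fixed constant $5$ is where Banaszczyk's careful two-regime argument is needed. The remaining pieces — the log-concavity engine, the point-versus-body dictionary, and the union bound over the $n$ prefixes that exactly consumes the $\frac{1}{2n}$ slack — are routine given the Gaussian-measure toolbox.
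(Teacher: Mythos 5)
You should first note that the paper does not prove this statement at all: it is quoted verbatim as Lemma~2.5 of Banaszczyk~\cite{bana12} and used as a black box (indeed the paper remarks that this reliance is precisely why Theorem~\ref{thm:generalbodiesthm} is non-constructive). So the only question is whether your reconstruction of Banaszczyk's argument stands on its own, and it does not. The preliminary tools you set up are fine: the two-sided estimate $g_U(\mathbf{p})^2e^{-\|\mathbf{v}\|_2^2}\le g_U(\mathbf{p}+\mathbf{v})g_U(\mathbf{p}-\mathbf{v})\le g_U(\mathbf{p})^2$ for $g_U=\mathbf{1}_U\star\gamma$ is correct (the lower bound is just convexity of $\log g_U(\mathbf{x})+\tfrac12\|\mathbf{x}\|_2^2$, the upper bound is Pr\'ekopa), and so is the observation that $\mathbf{x}\notin tK$ with $t\ge 2$ forces $\mu(K-\mathbf{x})\le 1-\mu(K)\le\frac{1}{2n}$. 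But these only reduce the theorem to the claim that the signs can be chosen so that the potential $\mu(K-\mathbf{s}_j)$ never drops to $\frac{1}{2n}$, and that claim is exactly where your write-up stops: you concede that the greedy rule decays like $e^{-j/2}$, and you then defer to ``Banaszczyk's careful two-regime argument,'' invoking his non-prefix balancing theorem and Ehrhard's inequality without specifying the inductive statement, the sign-selection rule, or the measure inequality that makes the dilation constant independent of $n$. That is the entire content of the lemma, not a calibration detail.

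Concretely, the known proof does not run a forward potential argument on $\mu(K-\mathbf{s}_j)$ at all. Banaszczyk works backwards, defining from $K$ and the vectors $\mathbf{v}_n,\ldots,\mathbf{v}_1$ a recursive sequence of symmetric convex bodies via an operation $K\mapsto (K\ast\mathbf{v})$ (the set of points from which one of the two signed steps lands in $K$, suitably convexified), proving a Gaussian-measure inequality showing this operation does not decrease measure when $\mu(K)\ge\frac12$, and enforcing the prefix constraint by intersecting with the fixed dilate of $K$ at every stage --- it is these $n$ intersections whose total measure loss is charged against the hypothesis $\mu(K)\ge 1-\frac{1}{2n}$, which is the rigorous form of the ``union bound over prefixes'' you gesture at. None of this machinery (the body recursion, the measure monotonicity lemma, the bookkeeping that yields the absolute constant $5$) appears in your proposal, and without it the argument does not go through; as written, your text is a plan for reading Banaszczyk's proof rather than a proof.
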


The following theorem is an easy consequence of Theorem~\ref{thm:bana-ss}, which is implicit in the second author's work on the discrepancy of boxes and polytopes~\cite{nikolov-boxes}. We include the short proof here.

\begin{theorem}\label{thm:gamma2-pdisc}
    For any $m\times n$ matrix $\mathbf{A}$, 
    \[
    \pdisc\left(\mathbf{A}\right) \lesssim \sqrt{\log(2mn)} \,\gamma_2\left(\mathbf{A}\right).
    \]
    As a special case, for any set system $(\Omega, \mathcal{S})$, and any $\sigma:\Omega \to [|\Omega|]$
    \[
    \pdisc(\mathcal{S},\sigma) \lesssim \sqrt{\log(2mn)} \, \gamma_2(\mathcal{S}).
    \]    
\end{theorem}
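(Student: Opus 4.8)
The plan is to combine a near-optimal $\gamma_2$ factorization of $\mathbf{A}$ with Banaszczyk's vector-balancing theorem (Theorem~\ref{thm:bana-ss}), choosing the symmetric convex body in that theorem specifically so that membership in it controls the $\ell_\infty$ norm of the relevant partial sums.

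First, I would fix $\varepsilon>0$ and a factorization $\mathbf{A}=\mathbf{L}\mathbf{R}$ with $\mathbf{L}\in\R^{m\times r}$, $\mathbf{R}\in\R^{r\times n}$ and $\|\mathbf{L}\|_{2\to\infty}\|\mathbf{R}\|_{1\to 2}\le(1+\varepsilon)\gamma_2(\mathbf{A})$. The case $\mathbf{A}=\mathbf{0}$ is trivial, so assume otherwise; then after rescaling the two factors by reciprocal scalars we may assume each column $\mathbf{r}_i$ of $\mathbf{R}$ satisfies $\|\mathbf{r}_i\|_2\le 1$ and each row $\mathbf{u}_k$ of $\mathbf{L}$ satisfies $\|\mathbf{u}_k\|_2\le g$, where $g\eqdef(1+\varepsilon)\gamma_2(\mathbf{A})$. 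Note that the $i$-th column of $\mathbf{A}$ is $\mathbf{a}_i=\mathbf{L}\mathbf{r}_i$, so that $\sum_{i=1}^j x_i\mathbf{a}_i=\mathbf{L}\big(\sum_{i=1}^j x_i\mathbf{r}_i\big)$ for any signs $x_i$.

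Next, I would set $t\eqdef g\sqrt{2\ln(4mn)}$ and take
\[
K\eqdef\{\mathbf{z}\in\R^r:\ |\langle\mathbf{u}_k,\mathbf{z}\rangle|\le t\ \text{ for all }k\in[m]\},
\]
which is a closed, origin-symmetric convex set. For $\mathbf{g}\sim N(\mathbf{0},\mathbf{I}_r)$, each $\langle\mathbf{u}_k,\mathbf{g}\rangle$ is a centered Gaussian of standard deviation $\|\mathbf{u}_k\|_2\le g$, so the standard Gaussian tail bound and a union bound give $\Pr[\mathbf{g}\notin K]\le 2m\exp(-t^2/2g^2)=\tfrac1{2n}$ by the choice of $t$; hence $K$ has Gaussian measure at least $1-\tfrac1{2n}$. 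Applying Theorem~\ref{thm:bana-ss} to the vectors $\mathbf{r}_1,\dots,\mathbf{r}_n$ (each of $\ell_2$ norm at most $1$) and the body $K$ yields $\mathbf{x}\in\{-1,+1\}^n$ with $\sum_{i=1}^j x_i\mathbf{r}_i\in 5K$ for every $j$. Multiplying by $\mathbf{L}$, the $k$-th coordinate of $\sum_{i=1}^j x_i\mathbf{a}_i$ is $\langle\mathbf{u}_k,\sum_{i=1}^j x_i\mathbf{r}_i\rangle$, which has absolute value at most $5t$; thus $\big\|\sum_{i=1}^j x_i\mathbf{a}_i\big\|_\infty\le 5t$ for all $j$, so $\pdisc(\mathbf{A})\le 5t=5(1+\varepsilon)\gamma_2(\mathbf{A})\sqrt{2\ln(4mn)}$. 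Letting $\varepsilon\to0$ and absorbing absolute constants gives $\pdisc(\mathbf{A})\lesssim\sqrt{\log(2mn)}\,\gamma_2(\mathbf{A})$, and the set-system statement follows by applying this to the incidence matrix of $\mathcal{S}$ with columns ordered by $\sigma$.

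I do not expect a serious obstacle here: the only real design choice is that of $K$, and the key point is that taking $K$ to be the $\mathbf{L}$-preimage of an $\ell_\infty$-ball of radius $t\asymp\gamma_2(\mathbf{A})\sqrt{\log(2mn)}$ simultaneously (i) has Gaussian measure $\ge 1-\tfrac1{2n}$, verified by a union bound over the $m$ rows of $\mathbf{L}$ whose norms are bounded by the $\|\cdot\|_{2\to\infty}$ factor, and (ii) converts Banaszczyk's conclusion directly into an $\ell_\infty$ bound on the partial sums of the columns of $\mathbf{A}$. The only points requiring mild care are the degenerate case $\mathbf{A}=\mathbf{0}$ and the harmless rescaling that normalizes the two factors.
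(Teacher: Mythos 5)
Your proposal is correct and follows essentially the same route as the paper: a (near-)optimal normalized factorization $\mathbf{A}=\mathbf{L}\mathbf{R}$, the body $K$ taken as the $\mathbf{L}$-preimage of an $\ell_\infty$-ball of radius $\asymp\gamma_2(\mathbf{A})\sqrt{\log(2mn)}$ with Gaussian measure at least $1-\tfrac{1}{2n}$, and Theorem~\ref{thm:bana-ss} applied to the columns of $\mathbf{R}$. The only differences are cosmetic (an $\varepsilon$-slack in the factorization and a slightly different constant inside the logarithm from the two-sided tail bound), so nothing further is needed.
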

\begin{proof}
    Suppose that the factorization $\mathbf{A} = \mathbf{L}\mathbf{R}$, where $\mathbf{L}$ is $m\times k$ and $\mathbf{R}$ is $k\times n$, achieves $\gamma_2(\mathbf{A})$. In particular, suppose that $\|\mathbf{L}\|_{2\to\infty} = \gamma_2(\mathbf{A})$ and $\|\mathbf{R}\|_{1\to 2} =1$. We can always make sure this is the case by multiplying $L$ by $\frac{\gamma_2(\mathbf{A})}{\|\mathbf{L}\|_{2\to\infty}}$ and $R$ by $\frac{\|\mathbf{L}\|_{2\to\infty}}{\gamma_2(\mathbf{A})} = \frac{1}{\|\mathbf{R}\|_{1\to 2}}$. 
    
    Now let $K$ be the convex body $K=\{\mathbf{y} \in \R^k: \|\mathbf{Ly}\|_\infty \le t\gamma_2(\mathbf{A})\}$, for a value of $t$ we will choose shortly. A standard calculation (see the proof of Theorem 3.4 of~\cite{matouvsek2020factorization}) shows that the Gaussian measure of $K$ is at least $(1-e^{-t^2/2})^m \ge 1-me^{-t^2/2}$, which is $1-\frac{1}{2n}$ for $t = \sqrt{2\ln(2mn)}$. We can then apply Theorem~\ref{thm:bana-ss} to $K$ and the column vectors $\mathbf{r}_1, \ldots, \mathbf{r}_n$ of $\mathbf{R}$, and conclude that there is a $\mathbf{x}\in \{-1,+1\}^n$ such that, for each $j \in \{1, \ldots, n\}$,
    \begin{align*}
    \sum_{i=1}^j x_i \mathbf{r}_i \in 5K
    &\iff
    \left\|\sum_{i=1}^j x_i \mathbf{L}\mathbf{r}_i\right\|_\infty \le 5\sqrt{2\ln(2mn)}\,\gamma_2(\mathbf{A})\\
    &\iff 
    \left\|\sum_{i=1}^j x_i \mathbf{a}_i\right\|_\infty \le 5\sqrt{2\ln(2mn)}\,\gamma_2(\mathbf{A}).
    \end{align*}
    Above, we first used the definition of $K$, and next we used the property $\mathbf{LR} = \mathbf{A}$. Since $j \in \{1, \ldots, n\}$ was arbitrary, this completes the proof.
    \end{proof}

We remark here that there is no known polynomial time algorithm to find the colorings guaranteed by Theorem~\ref{thm:gamma2-pdisc}.

\subsection{A Few More Facts}
Finally, our proof requires an additional definition, and a simple counting lemma.% and an observation about the function $f$ defined in equation~\eqref{eq:ub} for $d$-dimensional axis-aligned boxes. 

Let us first introduce the concept of a \emph{maximal arithmetic progression}. We say that an arithmetic progression $\AP_d(\mathbf{a}, \mathbf{b},\ell)$ is \emph{maximal} inside $\Omega_{\mathbf{N}}$ for $\mathbf{N} = (N_1, \ldots, N_d)$, if $\mathbf{a} + \ell\cdot \mathbf{b}$ and $\mathbf{a} - \mathbf{b}$ are both not in $\Omega_{\mathbf{N}}$, i.e., the arithmetic progression is maximal if it cannot be extended in either direction while staying inside $\Omega_{\mathbf{N}}$. Notice that, for any $\mathbf{a}$ and $\mathbf{b}$ in $\Z^d$ there is at most one maximal $\AP_d(\mathbf{a}, \mathbf{b}, \ell)$. We use the notation $\MAP_{\mathbf{N}}(\mathbf{a},\mathbf{b})$ for the unique maximal arithmetic progression with step $\mathbf{b}$ containing $\mathbf{a}$. Note that we don't require $\mathbf{a}$ to be an endpoint of the $\MAP_{\mathbf{N}}(\mathbf{a},\mathbf{b})$. Equivalently, $\MAP_{\mathbf{N}}(\mathbf{a},\mathbf{b})$ is the intersection of the residue class of $\mathbf{a} \pmod{\mathbf{b}}$ with $\Omega_{\mathbf{N}}$. This alternative definition also makes clear that, for a fixed $\mathbf{b}$, the sets $\MAP_{\mathbf{N}}(\mathbf{a},\mathbf{b})$ partition $\Omega_{\mathbf{N}}$. 

The following is stated in the proof of Lemma 2.3 of Fox, Xu, Zhou~\cite{fox2024discrepancy}.
\begin{lemma}\label{lem:large-sets}
    For every $s \ge 2$, the quantity
    \[
    \left|\left\{\mathbf{b} \in \Z^d: \max_{\mathbf{a} \in \Z^d} |\MAP_{\mathbf{N}}(\mathbf{a},\mathbf{b})| \ge s\right\}\right|
    \] is bounded above by 
    \[
    \prod_{i=1}^d \left(\frac{4N_i}{s} + 1\right)
    %\le 4^d \sum_{S \subseteq [d]} s^{-|S|}\prod_{i \in S}N_i
    %\lesssim_d \max_{k=1}^d \max_{S \subseteq [d]: |S| = k}\prod_{i \in S}\left(\frac{N_i}{s}\right).
    \lesssim_d \max_{I \subseteq [d]}\prod_{i \in I}\left(\frac{N_i}{s}\right).
    \]
\end{lemma}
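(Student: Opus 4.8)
The plan is elementary: I would bound each coordinate of an admissible step vector $\mathbf{b}$, count the lattice points in the resulting box, and then simplify the product into the $\max$ form.

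\textbf{Step 1 (coordinate bound).} Suppose $\mathbf{b}\in\Z^d$ satisfies $\max_{\mathbf{a}\in\Z^d}|\MAP_{\mathbf{N}}(\mathbf{a},\mathbf{b})|\ge s$. First note $\mathbf{b}\neq\vec{0}$, since for $\mathbf{b}=\vec{0}$ every residue class is a single point while $s\ge 2$. Fix $\mathbf{a}$ attaining the maximum and let $\ell=|\MAP_{\mathbf{N}}(\mathbf{a},\mathbf{b})|\ge s$. Because $\MAP_{\mathbf{N}}(\mathbf{a},\mathbf{b})$ is the intersection of a residue class with a box, it is a contiguous progression $\{\mathbf{c},\mathbf{c}+\mathbf{b},\dots,\mathbf{c}+(\ell-1)\mathbf{b}\}$, all of whose elements lie in $\Omega_{\mathbf{N}}$. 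Comparing the $i$-th coordinates of the endpoints $\mathbf{c}$ and $\mathbf{c}+(\ell-1)\mathbf{b}$, both in $[N_i]$, gives $(\ell-1)|b_i|\le N_i-1$, hence
\[
|b_i|\;\le\;\frac{N_i-1}{s-1}\;\le\;\frac{2N_i}{s},
\]
using $\ell\ge s$ and $s-1\ge s/2$, which is valid for $s\ge 2$.

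\textbf{Step 2 (count and simplify).} The set in question is therefore contained in $\{\mathbf{b}\in\Z^d:|b_i|\le 2N_i/s\text{ for all }i\}$, whose size is $\prod_{i=1}^d\bigl(2\lfloor 2N_i/s\rfloor+1\bigr)\le\prod_{i=1}^d\bigl(4N_i/s+1\bigr)$, giving the first displayed bound. For the $\lesssim_d$ estimate I would expand the product as $\prod_{i=1}^d(4N_i/s+1)=\sum_{I\subseteq[d]}4^{|I|}\prod_{i\in I}(N_i/s)\le 4^d\sum_{I\subseteq[d]}\prod_{i\in I}(N_i/s)\le 8^d\max_{I\subseteq[d]}\prod_{i\in I}(N_i/s)$, where the last step bounds a sum of $2^d$ nonnegative terms by $2^d$ times the largest one.

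\textbf{Main obstacle.} There is essentially none — the entire argument is a coordinate-wise bound followed by routine counting. The only point that demands any care is excluding the degenerate case $\mathbf{b}=\vec{0}$ and, relatedly, making sure that the ``length'' $\ell$ really equals the number of elements of the maximal progression; this holds precisely because $\mathbf{b}\neq\vec{0}$ forces the listed points to be distinct and consecutively spaced, so that the two endpoints differ by exactly $(\ell-1)\mathbf{b}$.
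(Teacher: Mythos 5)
Your proof is correct. The paper does not give its own argument for this lemma---it cites it from the proof of Lemma~2.3 of Fox, Xu, and Zhou---and your coordinate-wise endpoint bound $(\ell-1)|b_i|\le N_i-1$ followed by counting lattice points in the box and expanding the product over subsets $I\subseteq[d]$ is exactly the standard derivation of this estimate.
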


\section{Discrepancy Upper Bound}
We are now ready to prove the main theorem, restated below.% for ease of use. 
\main*
The structure of the proof is as follows. Let $\AA_{\mathbf{N}}$ be the set of arithmetic progressions and $\MM_{\mathbf{N}}$ be the corresponding set of maximal arithmetic progressions. We first show $\gamma_2\left(\MM_{\mathbf{N}}\right) \leq f(\mathbf{N})$ in~\Cref{lem:gamma2-map-ub} of Section~\ref{sec:gamma2-bound-map}. Roughly we do so by defining two set systems: $\MM_{> s}$, and $\MM_{\le s}$. $\MM_{> s}$ consists of all maximal APs $\MAP_{\mathbf{N}}(\mathbf{a},\mathbf{b})$ for which $|\MAP_{\mathbf{N}}(\mathbf{a}', \mathbf{b})| > s$ for at least one $\mathbf{a}'$ (not necessarily equal to $\mathbf{a}$). Lemma~\ref{lem:large-sets} allows us to bound the degree of  $\MM_{> s}$. Then $\MM_{\le s}$ consists of all $\MAP_{\mathbf{N}}(\mathbf{a},\mathbf{b})$ for which $|\MAP_{\mathbf{N}}(a',b)| \le s$ for all $a'$. Here every set has size at most $s$. Since $\MM_{\mathbf{N}} = \MM_{> s} \cup \MM_{\le s}$, using the $\gamma_2$ properties stated in Section~\ref{sec:gamma2norm} we can bound $\gamma_2\left(\MM_{\mathbf{N}}\right)$ by bounding $\gamma_2\left(\MM_{> s}\right)$ and $\gamma_2\left(\MM_{\le s}\right)$. 

This alone is sufficient for a proof of the discrepancy upper bound in~\Cref{thm:main}, given in Section~\ref{sec:nonconstructive}. We argue that there is an ordering $\sigma:\Omega_{\vec{N}} \to [|\Omega_{\vec{N}}|]$ such that $\disc(\AA_{\mathbf{N}}) \lesssim \pdisc(\MM_{\mathbf{N}},\sigma)$. This holds for the $\sigma$ given by the lexicographic order: $\sigma(x) < \sigma(y)$ if, for the smallest coordinate index $i$ for which $x_i \neq y_i$, we have $x_i < y_i$.

We also prove a tight upper bound $\gamma_2\left(\AA_{\mathbf{N}}\right)\lesssim f(\mathbf{N})$ on the $\gamma_2$ norm of arithmetic progressions in~\Cref{lem:gamma2-ap-ub} of Section~\ref{sec:gamma2-bound}. This gives another proof of the upper bound in~\Cref{thm:main}, and implies an efficient randomized algorithm to compute a coloring achieving the discrepancy upper bound via~\Cref{thm:gswalk}.

\subsection{Maximal Arithmetic Progressions}\label{sec:gamma2-bound-map}

As in the outline above, we first prove an upper bound on the set system $\MM_{\vec{N}}$ of all maximal APs.

\begin{lemma}\label{lem:gamma2-map-ub}
    Let $\MM_{\mathbf{N}} = \{\MAP_{\mathbf{N}}\left(\mathbf{a},\mathbf{b}\right): \mathbf{a},\mathbf{b}\in \Z^d\}$ be the set of all maximal APs in $\Omega_{\mathbf{N}}$. Then 
    \(\gamma_2(\MM_{\mathbf{N}}) \lesssim_d f\left(\mathbf{N}\right).\)
\end{lemma}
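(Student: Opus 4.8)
The plan is to follow the outline sketched right before the lemma statement: split $\MM_{\vec N}$ according to a threshold parameter $s>0$ into the subsystem $\MM_{>s}$ of maximal APs whose step $\vec b$ admits \emph{some} long residue class (length $>s$ in $\Omega_{\vec N}$) and the complementary subsystem $\MM_{\le s}$ of maximal APs whose step $\vec b$ has all residue classes of length $\le s$. Then $\MM_{\vec N} = \MM_{>s}\cup\MM_{\le s}$, and by the union property~\eqref{eq:gamma2-union} it suffices to bound $\gamma_2(\MM_{>s})$ and $\gamma_2(\MM_{\le s})$ separately.

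For $\MM_{\le s}$, every set has size at most $s$, so the size bound~\eqref{eq:gamma2-sizebound} gives $\gamma_2(\MM_{\le s})\le\sqrt s$. For $\MM_{>s}$, I want to use the degree bound~\eqref{eq:gamma2-degreebound}: a fixed point $\vec x\in\Omega_{\vec N}$ lies in exactly one maximal AP per step $\vec b$, so the degree of $\vec x$ in $\MM_{>s}$ is at most the number of admissible steps $\vec b$, i.e.\ the number of $\vec b\in\Z^d$ with $\max_{\vec a}|\MAP_{\vec N}(\vec a,\vec b)|\ge s$ (restricted to the set of $\vec b$ that actually appear — one can also quotient out scalar multiples, but Lemma~\ref{lem:large-sets} as stated already counts all such $\vec b$). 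Lemma~\ref{lem:large-sets} bounds this count by $\lesssim_d \max_{I\subseteq[d]}\prod_{i\in I}(N_i/s)$, hence $\gamma_2(\MM_{>s})\lesssim_d \sqrt{\max_{I}\prod_{i\in I}(N_i/s)}$. Combining, $\gamma_2(\MM_{\vec N})^2 \lesssim_d s + \max_{I\subseteq[d]}\prod_{i\in I}(N_i/s) = \max_{I\subseteq[d]}\bigl(s + \prod_{i\in I} N_i/s^{|I|}\bigr)$ (absorbing the $I=\emptyset$ term into the ``$s$'').

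It remains to choose $s$ optimally. Setting $s = f(\vec N)^2$, I claim (this is the equation~\eqref{eq:prod-s} referenced in the excerpt) that $s$ is exactly the value for which $s = \max_{I\subseteq[d]} s^{-|I|}\prod_{i\in I}N_i$; concretely, using Lemma~\ref{lm:f-alt}, with $N_1\ge\cdots\ge N_d$ and $k^*$ as defined there, $s = (\prod_{i=1}^{k^*}N_i)^{1/(k^*+1)}$, and one checks that for this $k^*$ the prefix product term $\prod_{i=1}^{k^*}N_i/s^{k^*} = s$ achieves the maximum over all $I$ (the definition of $k^*$ via $N_{k^*+1}<(\prod_{i\le k^*}N_i)^{1/(k^*+1)}$ is precisely what makes extending or shrinking $I$ not increase the term). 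This is a short monotonicity computation mirroring the proof of Lemma~\ref{lm:f-alt}. With this choice each term in the max is at most $s = f(\vec N)^2$, so $\gamma_2(\MM_{\vec N})^2\lesssim_d f(\vec N)^2$, i.e.\ $\gamma_2(\MM_{\vec N})\lesssim_d f(\vec N)$.

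The main obstacle, such as it is, is the final optimization: verifying that the threshold $s=f(\vec N)^2$ simultaneously balances the ``size'' term $s$ against \emph{all} the ``degree'' terms $\prod_{i\in I}N_i/s^{|I|}$, not just one of them. This is where Lemma~\ref{lm:f-alt}'s characterization of $f$ via the critical index $k^*$ does the work, and I would present it as an explicit lemma (the content of~\eqref{eq:prod-s}) with the two-line monotonicity argument. Everything else is a direct application of the $\gamma_2$ properties from Lemma~\ref{lem:gamma2} and the counting bound in Lemma~\ref{lem:large-sets}; in particular there is no need for any number-theoretic input beyond Lemma~\ref{lem:large-sets}, and no need for partial colorings at this stage.
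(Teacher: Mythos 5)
Your proposal is correct and follows essentially the same route as the paper's proof: the same split into $\MM_{\le s}$ and $\MM_{>s}$, the size bound for the former, the degree bound via Lemma~\ref{lem:large-sets} and the fact that each step $\vec{b}$ yields a partition of $\Omega_{\vec{N}}$ for the latter, the union property, and the choice $s=f(\vec{N})^2$ verified through the critical index $k^*$ of Lemma~\ref{lm:f-alt}. The only detail you defer --- checking that the prefix product at $k^*$ attains the maximum and equals $s$ --- is exactly the short computation the paper carries out (showing $N_{k^*}\ge s > N_{k^*+1}$), so there is no gap.
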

\begin{proof}
Let $s \ge 1$ be a parameter to be determined later.
Let $\MM_{\le s}$ consist of all $\MAP_{\mathbf{N}}(\mathbf{a},\mathbf{b})$ for which $|\MAP_{\mathbf{N}}(\mathbf{a'},\mathbf{b})| \le s$ for all $\mathbf{a}' \in \Z^d$. Since all sets in $\MM_{\le s}$ have cardinality at most $s$, the bound $\gamma_2(\MM_{\le s}) \le \sqrt{s}$ holds by the size bound of Lemma~\ref{lem:gamma2}.

Define the set $B$ as 
\[
B \eqdef \left\{\mathbf{b} \in \Z^d: \max_{\mathbf{a} \in \Z^d} |\MAP_{\mathbf{N}}(\mathbf{a},\mathbf{b})| > s\right\}.
\]
Let $\MM_{> s} = \MM_{\mathbf{N}}\setminus \MM_{\le s}$, and notice that \[\MM_{> s} \eqdef \{\MAP_{\mathbf{N}}(\mathbf{a},\mathbf{b}): \mathbf{a} \in \Z^d, \mathbf{b} \in B\}.\] Recall that, for any $\mathbf{b}$, the sets $\MAP_{\mathbf{N}}(\mathbf{a},\mathbf{b})$ partition $\Omega_{\mathbf{N}}$. Therefore, for any $\mathbf{b}$, $\MM_{\mathbf{N}}(\mathbf{b}) \eqdef \{\MAP_{\mathbf{N}}(\mathbf{a},\mathbf{b}): \mathbf{a} \in \Z^d\}$ is a set system of disjoint sets, i.e., it has maximum degree one. It follows that the maximum degree of $\MM_{> s} = \bigcup_{b \in B} \MM_{\mathbf{N}}(b)$ is at most $|B|$, and, by the degree bound of Lemma~\ref{lem:gamma2}, and using Lemma~\ref{lem:large-sets},  we get
\[
\gamma_2(\MM_{> s}) \le
\sqrt{|B|} \lesssim_d\left(\max_{I \subseteq [d]}\prod_{i \in I}{\frac{N_i}{s}}\right)^{\frac12}.
\]
Since $\MM_{\mathbf{N}} = \MM_{\le s} \cup \MM_{> s}$, the union property of Lemma~\ref{lem:gamma2} gives us
\begin{equation}\label{eq:gamma2-mm}
\gamma_2(\MM_{\mathbf{N}})^2 \lesssim_d s + \max_{I \subseteq [d]}\prod_{i \in I}\frac{N_i}{s}.
\end{equation}
To get the final bound, we need to pick the right value for $s$. 

First a definition: for $I \subseteq [d]$, let $P_I = \prod_{i \in I}N_i$. Choose $s \eqdef f(\mathbf{N})^2$ where $f\left(\textbf{N}\right) \coloneqq \max_{I \subseteq [d]} \left(P_I\right)^{\frac{1}{2|I| + 2}}$. Let $I^*$ such that $f\left(\textbf{N}\right) = \left(P_{I^*}\right)^{\frac{1}{2|I^*| + 2}}$. Then,
\begin{align*}
    s + \max_{I \subset [d]}\frac{P_I}{s^{|I|}} &= \left(P_{I^*}\right)^{\frac{1}{|I^*| + 1}} + \max_{I \subseteq [d]}\frac{P_I}{P_{I^*}^{\frac{|I|}{|I^*| + 1}}}\\
    &\leq \left(P_{I^*}\right)^{\frac{1}{|I^*| + 1}} + \max_{I \subseteq [d]}\frac{P_I}{P_{I}^{\frac{|I|}{|I| + 1}}}\\
    &= \left(P_{I^*}\right)^{\frac{1}{|I^*| + 1}} + \max_{I \subseteq [d]}\left(P_I\right)^{\frac{1}{|I| + 1}} = 2f\left(\textbf{N}\right)^2
\end{align*}
where the inequality follows from the definition of $f\left(\textbf{N}\right)$. 
\end{proof}

\subsection{Upper Bound via Prefix Discrepancy}\label{sec:nonconstructive}

Here we prove the discrepancy upper bound part of~\Cref{thm:main}, with an argument that does not imply the constructive part. Formally, we show the following theorem.

\begin{theorem}\label{thm:main-ub}
    For any positive integer $d$ and vector $\mathbf{N} = \left(N_1, \ldots, N_d\right)$, where each $N_i$ is an integer greater than $1$, let $\AA_{\mathbf{N}}$ be the family of all arithmetic progression defined on the universe $\Omega_{\mathbf{N}} $. The discrepancy of $\AA_{\mathbf{N}}$ is bounded above as 
    \begin{equation*}%\label{eq:main}
        \disc\left(\AA_{\mathbf{N}}\right) \lesssim_d
        \sqrt{\log |\Omega_{\mathbf{N}}|} \cdot f\left(\mathbf{N}\right).
    \end{equation*}
\end{theorem}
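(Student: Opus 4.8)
The plan is to reduce the discrepancy of $\AA_{\mathbf{N}}$ to the prefix discrepancy of the maximal arithmetic progressions $\MM_{\mathbf{N}}$, and then apply Lemma~\ref{lem:gamma2-map-ub} together with Theorem~\ref{thm:gamma2-pdisc}. Concretely, fix the lexicographic ordering $\sigma$ on $\Omega_{\mathbf{N}}$, i.e., $\sigma(\mathbf{x}) < \sigma(\mathbf{y})$ iff $\mathbf{x} \preclex \mathbf{y}$. The key structural observation is that every arithmetic progression $A \in \AA_{\mathbf{N}}$ is a ``contiguous slice'' of some maximal progression $\MAP_{\mathbf{N}}(\mathbf{a},\mathbf{b})$: if we list the elements of $\MAP_{\mathbf{N}}(\mathbf{a},\mathbf{b})$ in the natural order along the progression (say in increasing order of the index $i$ with $\mathbf{a} + i\mathbf{b}$), then $A$ is exactly the set of elements with index in some interval $[p, q]$. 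So $A = A' \setminus A''$ where $A'$ is the prefix of indices $\le q$ and $A''$ is the prefix of indices $< p$, both of which are again ``prefixes'' of the maximal progression.

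The subtlety is that the natural order along a progression is \emph{not} the same as the lex order $\sigma$ restricted to that progression, so I cannot directly equate ``prefix along the progression'' with ``prefix in $\sigma$''. I would handle this as follows. For a fixed step $\mathbf{b}$, the maximal progressions $\MAP_{\mathbf{N}}(\mathbf{a},\mathbf{b})$ partition $\Omega_{\mathbf{N}}$. Along each such progression, consecutive elements differ by $\pm\mathbf{b}$, and I can orient $\mathbf{b}$ so that moving forward along the progression is lex-increasing (replace $\mathbf{b}$ by $-\mathbf{b}$ if needed; this does not change the set of maximal progressions or their slices). With this orientation, the order along any $\MAP_{\mathbf{N}}(\mathbf{a},\mathbf{b})$ \emph{is} the restriction of the lex order $\sigma$ to that progression. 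Hence every prefix (in the progression order) of $\MAP_{\mathbf{N}}(\mathbf{a},\mathbf{b})$ is of the form $\{\omega \in \MAP_{\mathbf{N}}(\mathbf{a},\mathbf{b}) : \sigma(\omega) \le j\}$ for a suitable $j$, and conversely for every $j$ the set $\{\omega \in \MAP_{\mathbf{N}}(\mathbf{a},\mathbf{b}): \sigma(\omega) \le j\}$ is such a prefix. Therefore the prefix-discrepancy quantity $\max_j \max_{\mathbf{a},\mathbf{b}} |\sum_{\omega \in \MAP_{\mathbf{N}}(\mathbf{a},\mathbf{b}): \sigma(\omega)\le j} \chi(\omega)|$ controls the discrepancy of \emph{all} arithmetic progression slices: for $A$ with index interval $[p,q]$ in $\MAP_{\mathbf{N}}(\mathbf{a},\mathbf{b})$, write $\sum_{\omega\in A}\chi(\omega)$ as a difference of two such prefix sums, so $|\sum_{\omega\in A}\chi(\omega)| \le 2\,\pdisc(\MM_{\mathbf{N}},\sigma)$ for the coloring $\chi$ achieving the prefix discrepancy. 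Thus $\disc(\AA_{\mathbf{N}}) \le 2\,\pdisc(\MM_{\mathbf{N}},\sigma)$.

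Now I combine the pieces. By Theorem~\ref{thm:gamma2-pdisc} applied to the incidence matrix $\mathbf{A}$ of $\MM_{\mathbf{N}}$ ordered by $\sigma$, which has $m \le |\MM_{\mathbf{N}}| \le |\Omega_{\mathbf{N}}|^2$ rows (at most one maximal progression per pair $(\text{residue class}, \mathbf{b})$, and crude bounds suffice since $\log$ of a polynomial in $|\Omega_{\mathbf{N}}|$ is $\Theta(\log|\Omega_{\mathbf{N}}|)$) and $n = |\Omega_{\mathbf{N}}|$ columns, we get $\pdisc(\MM_{\mathbf{N}},\sigma) \lesssim \sqrt{\log(2mn)}\,\gamma_2(\MM_{\mathbf{N}}) \lesssim \sqrt{\log|\Omega_{\mathbf{N}}|}\,\gamma_2(\MM_{\mathbf{N}})$. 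By Lemma~\ref{lem:gamma2-map-ub}, $\gamma_2(\MM_{\mathbf{N}}) \lesssim_d f(\mathbf{N})$. Chaining these inequalities yields $\disc(\AA_{\mathbf{N}}) \lesssim_d \sqrt{\log|\Omega_{\mathbf{N}}|}\cdot f(\mathbf{N})$, as claimed.

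I expect the main obstacle to be the bookkeeping in the first two paragraphs: making precise the claim that, after orienting $\mathbf{b}$, the lex order agrees with the progression order on each maximal progression, and carefully verifying that an arbitrary $A \in \AA_{\mathbf{N}}$ (given as $\Omega_{\mathbf{N}} \cap \mathrm{AP}_d(\mathbf{a},\mathbf{b},\ell)$, where $\mathbf{a}$ need not lie in $\Omega_{\mathbf{N}}$ and the progression may stick out of the box on either end) is indeed a contiguous index-interval slice of the unique maximal progression through its residue class. One has to check the degenerate cases ($\mathbf{b} = \mathbf{0}$, singletons, $A$ empty after intersecting with $\Omega_{\mathbf{N}}$) but these are harmless. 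Everything else is a direct application of the cited results.
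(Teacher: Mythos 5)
Your proposal is correct and follows essentially the same route as the paper: reduce to the prefix discrepancy of $\MM_{\mathbf{N}}$ under the lexicographic order by observing that, after orienting $\mathbf{b}$ so its first nonzero coordinate is positive, every arithmetic progression is a lex-interval slice of a maximal progression (this is exactly the paper's Lemma~\ref{lem:lex}), giving $\disc(\AA_{\mathbf{N}}) \le 2\,\pdisc(\MM_{\mathbf{N}},\sigma)$, and then conclude via Theorem~\ref{thm:gamma2-pdisc} and Lemma~\ref{lem:gamma2-map-ub}. The remaining bookkeeping you flag (degenerate cases, and the crude polynomial bound on $|\MM_{\mathbf{N}}|$ inside the logarithm) is handled the same way in the paper and is indeed harmless.
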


While we don't need \Cref{thm:main-ub} to prove \Cref{thm:main}, we include its proof here because of its simplicity, and because we believe the technique of using prefix discrepancy to prove bounds on the discrepancy of set systems is useful and may have further application. It is also the technique we use in proving the discrepancy upper bounds in Theorem~\ref{thm:generalbodiesthm}, and was previously used by Nikolov in his upper bound on the discrepancy of axis-aligned boxes~\cite{nikolov-boxes}.

Before we prove~\Cref{thm:main-ub}, we first establish a useful lemma. For the statement of the lemma, let $\preclex$ be the lexicographic order on $\Z^d$, i.e., $\vec{x} \preclex \vec{y}$ if $\vec{x} = \vec{y}$, or if $x_i < y_i$ holds for the smallest integer $i$ for which $x_i \neq y_i$. 

\begin{lemma}\label{lem:lex}
    For any arithmetic progression $\AP_d(\mathbf{a},\mathbf{b},\ell)$ in $\Z^d$, there exist $\vec{x}, \vec{y} \in \AP_d(\mathbf{a},\mathbf{b},\ell)$ so that
    \[
    \AP_d(\mathbf{a},\mathbf{b},\ell) = 
    \{\vec{a} + i \vec{b}: i \in \Z\} \cap \{\vec{z}: \vec{x} \preclex \vec{z}\preclex \vec{y}\}.
    \]
\end{lemma}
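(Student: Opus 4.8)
The plan is to observe that the lexicographic order is monotone along the line carrying the progression, and then take $\vec{x},\vec{y}$ to be the two endpoints of the progression, put in the correct order.

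First I would record the monotonicity fact. Write $L \eqdef \{\vec{a} + i\vec{b} : i \in \Z\}$. If $\vec{b} = \vec{0}$, then $L = \{\vec{a}\}$ and also $\AP_d(\vec{a},\vec{b},\ell) = \{\vec{a}\}$, so taking $\vec{x} = \vec{y} = \vec{a}$ settles the lemma at once; hence assume $\vec{b} \neq \vec{0}$ and let $k_0 \eqdef \min\{k : b_k \neq 0\}$. For integers $i,j$, the points $\vec{a} + i\vec{b}$ and $\vec{a} + j\vec{b}$ agree on coordinates $1,\dots,k_0-1$ and, if $i \neq j$, differ in coordinate $k_0$, where they equal $a_{k_0} + i b_{k_0}$ and $a_{k_0} + j b_{k_0}$ respectively. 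Thus $\vec{a} + i\vec{b} \preclex \vec{a} + j\vec{b}$ if and only if $i = j$ or $(j-i) b_{k_0} > 0$. In particular, when $b_{k_0} > 0$ the map $i \mapsto \vec{a} + i\vec{b}$ is strictly $\preclex$-increasing, and when $b_{k_0} < 0$ it is strictly $\preclex$-decreasing; in both cases it is injective on $\Z$.

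Next I would choose $\vec{x}$ and $\vec{y}$ to be the $\preclex$-smaller and the $\preclex$-larger, respectively, of the two endpoints $\vec{a}$ and $\vec{a} + (\ell-1)\vec{b}$ (if $\ell = 1$ these coincide and we set $\vec{x} = \vec{y} = \vec{a}$); both lie in $\AP_d(\vec{a},\vec{b},\ell)$ by construction. To verify the set identity, take $\vec{z} = \vec{a} + i\vec{b} \in L$. In the case $b_{k_0} > 0$ we have $\vec{x} = \vec{a}$ and $\vec{y} = \vec{a} + (\ell-1)\vec{b}$, and by the monotonicity fact $\vec{x} \preclex \vec{z}$ holds iff $i \ge 0$, while $\vec{z} \preclex \vec{y}$ holds iff $i \le \ell-1$; hence $\vec{z}$ lies in the lex-interval iff $0 \le i \le \ell-1$, i.e.\ iff $\vec{z} \in \AP_d(\vec{a},\vec{b},\ell)$. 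The case $b_{k_0} < 0$ is identical after swapping the two endpoints. This gives $L \cap \{\vec{z} : \vec{x} \preclex \vec{z} \preclex \vec{y}\} = \AP_d(\vec{a},\vec{b},\ell)$, as required.

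I do not expect a genuine obstacle here: the argument is a one-line monotonicity observation. The only points needing care are the degenerate cases $\vec{b} = \vec{0}$ and $\ell = 1$ (absorbed by allowing $\vec{x} = \vec{y}$), and correctly identifying which endpoint is lex-smaller, which is dictated entirely by the sign of the leading nonzero entry $b_{k_0}$ of the common difference.
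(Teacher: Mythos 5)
Your proof is correct and follows essentially the same route as the paper's: both rest on the observation that $\preclex$ restricted to the line $\{\vec{a}+i\vec{b}: i\in\Z\}$ is monotone in $i$, with direction given by the sign of the leading nonzero coordinate of $\vec{b}$, and both take $\vec{x},\vec{y}$ to be the endpoints of the progression. The only cosmetic difference is that the paper reduces to $b_{k}>0$ by reversing the progression, whereas you handle both signs directly by ordering the endpoints.
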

\begin{proof}
    Fix $\vec{a}$ and $\vec{b} \neq 0$ (the lemma is trivial for $\vec{b} = 0$), and let $k$ be the smallest integer such that $b_k \neq 0$. We will assume that $b_k > 0$. This is without loss of generality, since for $\vec{a}' \eqdef \mathbf{a} + (\ell-1)\vec{b}$ we have 
    \(
    \AP_d(\mathbf{a},\mathbf{b},\ell) = \AP_d(\mathbf{a}',-\mathbf{b},\ell),
    \)
    and 
    \(
    \{\vec{a} + i \vec{b}: i \in \Z\} = \{\vec{a} - i \vec{b}: i \in \Z\},
    \)
    and, therefore, we can replace $\vec{a}$ with $\mathbf{a}'$ and $\vec{b}$ with $-\vec{b}$.

    Let $\vec{a}' = \mathbf{a} + (\ell-1)\vec{b}$ as above. Note that 
    \[
    \AP_d(\mathbf{a},\mathbf{b},\ell)
    = 
    \{\vec{a} + i \vec{b}: i \in \Z, i \ge 0\} 
    \cap 
    \{\vec{a}' + i \vec{b}: i \in \Z, i \le 0\}.
    \]
    We claim that 
    \begin{equation}\label{eq:AP-repr-pref}
    \{\vec{a} + i \vec{b}: i \in \Z, i \ge 0\}  = 
    \{\vec{a} + i \vec{b}: i \in \Z\} \cap\{\vec{z}: \vec{a} \preclex \vec{z}\},
    \end{equation}
    and 
    \begin{equation}\label{eq:AP-repr-suf}
    \{\vec{a}' + i \vec{b}: i \in \Z, i \le 0\}  = 
    \{\vec{a} + i \vec{b}: i \in \Z\} \cap\{\vec{z}: \vec{z}\preclex \vec{a}'\},
    \end{equation}
    and then the lemma follows from these two equation after setting $\vec{x} \eqdef \vec{a}$ and $\vec{y} \eqdef \vec{a}'$.
    
    Let us first prove \eqref{eq:AP-repr-pref}. 
    Since $b_j = 0$ for $j < k$ and $b_k > 0$, we have that, for any $\vec{z} = \vec{a} + i \vec{b}$, $i \ge 0$ if and only if $a_k \le z_k$. Moreover any such $\vec{z}$ satisfies $z_j = a_j$ for all $j < k$. This means that $\vec{a} \preclex \vec{z}$ if and only if $i \ge 0$. 

    The proof of \eqref{eq:AP-repr-suf} is analogous. First note that $\{\vec{a} + i \vec{b}: i \in \Z\}=\{\vec{a}' + i \vec{b}: i \in \Z\}$. Then, for any $\vec{z} = \vec{a}'+ i \vec{b}$, $i \le 0$ if and only if $a'_k \ge z_k$. Moreover any such $\vec{z}$ satisfies $z_j = a'_j$ for all $j < k$. This means that $\vec{z} \preclex \vec{a}'$ if and only if $i \ge 0$. 
\end{proof}

In addition to the proof of~\Cref{thm:main-ub} below, we also use Lemma~\ref{lem:lex} in the proof of the upper bound in Theorem~\ref{thm:generalbodiesthm}.

\begin{proof}[Proof of~\Cref{thm:main-ub}]
    We claim that there exists a bijection $\sigma:\Omega_{\mathbf{N}} \to [|\Omega_{\mathbf{N}}|]$ for which $\disc(\AA_{\mathbf{N}}) \le 2 \pdisc(\MM_{\mathbf{N}}, \sigma)$. Once we establish this, \Cref{thm:main} follows from~\Cref{thm:gamma2-pdisc} and \Cref{lem:gamma2-map-ub}.

    Define $\sigma(\mathbf{x})$ to be the rank of $\mathbf{x}$ in the lexicographic ordering of $\Omega_{\mathbf{N}}$: $\sigma(\mathbf{x}) = i$ if and only if there are exactly $i-1$ elements $\vec{y}$ of $\Omega_{\mathbf{N}}$ such that $\vec{y} \preclex\vec{x}$ and $\vec{y} \neq \vec{x}$.

    Consider an arithmetic progression $\AP_d(\mathbf{a},\mathbf{b},\ell)$ in $\AA_{\vec{N}}$. It follows from Lemma~\ref{lem:lex} that there exist integers $i$ and $j$ such that 
    \begin{equation}\label{eq:AP-repr}
        \AP_d(\mathbf{a},\mathbf{b},\ell) = \MAP_{\mathbf{N}}(\mathbf{a},\mathbf{b}) \cap \{\mathbf{x}\in \Omega_{\mathbf{N}}: i \le \sigma(\mathbf{x}) \le j\}.
    \end{equation}
    Indeed, if $\vec{x}$ and $\vec{y}$ are the points guaranteed by the Lemma~\ref{lem:lex}, we can take $i = \sigma(\vec{x})$ and $j = \sigma(\vec{y})$. Then equation~\eqref{eq:AP-repr} follows from the lemma and the observation that 
    \(
    \{\vec{a} + i\vec{b}: i \in \Z\} \cap \Omega_{\vec{N}}
    = \MAP_{\mathbf{N}}(\mathbf{a},\mathbf{b}).
    \)
    
    Now let $\chi:\Omega_{\mathbf{N}} \to \{-1,+1\}$ achieve $\pdisc(\MM_{\mathbf{N}},\sigma)$. By equation~\eqref{eq:AP-repr}, for any arithmetic progression $\AP_d(\mathbf{a},\mathbf{b},\ell)$ in $\Omega_{\mathbf{N}}$ there exist $i$ and $j$ such that
    \begin{align*}
    \left|\sum_{\mathbf{z} \in \AP_d(\mathbf{a},\mathbf{b},\ell)} \chi(\mathbf{z}) \right|
    &=
    \left|\sum_{\substack{\mathbf{z} \in \MAP_{\mathbf{N}}(\mathbf{a},\mathbf{b})\\ i \le \sigma(\mathbf{z}) \le j}} \chi(\mathbf{z}) \right|\\
    &= \left|\sum_{\substack{\mathbf{z} \in \MAP_{\mathbf{N}}(\mathbf{a},\mathbf{b})\\ \sigma(\mathbf{z}) \le j}} \chi(\mathbf{z})  - \sum_{\substack{\mathbf{z} \in \MAP_{\mathbf{N}}(\mathbf{a},\mathbf{b})\\ \sigma(\mathbf{z}) < i}} \chi(\mathbf{z})\right|\\
    &\le \left|\sum_{\substack{\mathbf{z} \in \MAP_{\mathbf{N}}(\mathbf{a},\mathbf{b})\\ \sigma(\mathbf{z}) \le j}} \chi(\mathbf{z})\right|  + \left|\sum_{\substack{\mathbf{z} \in \MAP_{\mathbf{N}}(\mathbf{a},\mathbf{b})\\ \sigma(\mathbf{z}) < i}} \chi(\mathbf{z})\right|\\
    &\le 2\pdisc(\MM_{\mathbf{N}},\sigma).
    \end{align*}
    Since $\AP_d(\mathbf{a},\mathbf{b},\ell)$ was arbitrary, this proves that $\disc(\AA_{\mathbf{N}}) \le 2\pdisc(\MM_{\mathbf{N}},\sigma)$, and the theorem follows from Theorem~\ref{thm:gamma2-pdisc} and Lemma~\ref{lem:gamma2-map-ub}.
\end{proof}

\subsection{Bound on \texorpdfstring{$\gamma_2\left(\AA_{\mathbf{N}}\right)$}{gamma-two of AP}}\label{sec:gamma2-bound}

In this section, we give a tight bound on $\gamma_2(\AA_{\mathbf{N}})$. This bound and Theorem~\ref{thm:gswalk} then imply Theorem~\ref{thm:main}, and, in particular, imply an efficient algorithm to find the low discrepancy coloring guaranteed to exist by Theorem~\ref{thm:main-ub}. 

The lemma below and its proof are a generalization of Proposition~7.1 of Matou\v{s}ek, Nikolov, and Talwar~\cite{matouvsek2020factorization}.

\begin{lemma}\label{lem:gamma2-ap-ub}
    For $N_1, ..., N_d \in \Z$, let $\mathbf{N} = \left(N_1, ..., N_d\right)$. Let $\AA_{\mathbf{N}}$ be the set of all arithmetic progressions in $\Omega_{\mathbf{N}}$. Then 
    \(
    \gamma_2(\AA_{\mathbf{N}}) \lesssim_d f\left(\mathbf{N}\right).
    \)
\end{lemma}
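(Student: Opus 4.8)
The plan is to follow the structure of the proof of Lemma~\ref{lem:gamma2-map-ub}, but to account for the fact that an arbitrary arithmetic progression is only a \emph{contiguous segment} of a maximal one rather than a whole maximal AP. Fix a common difference $\mathbf{b}$ and a residue class, and list $\MAP_{\mathbf{N}}(\mathbf{a},\mathbf{b}) = \{\mathbf{c}_0, \mathbf{c}_1, \ldots, \mathbf{c}_{m-1}\}$ in the order in which the points appear along the line (so $\mathbf{c}_{i+1} = \mathbf{c}_i + \mathbf{b}$ after suitably reorienting $\mathbf{b}$; this is the ordering used in Lemma~\ref{lem:lex}). Then every arithmetic progression in $\Omega_{\mathbf{N}}$ has the form $\{\mathbf{c}_p, \ldots, \mathbf{c}_q\}$ for some $0 \le p \le q \le m-1$, so it suffices to decompose the index interval $[p,q]$. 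Set $s \eqdef f(\mathbf{N})^2$, $\lambda_0 \eqdef \lfloor \log_2 s \rfloor$, and $\Lambda \eqdef \lceil \log_2 |\Omega_{\mathbf{N}}| \rceil$ (no residue class has more than $|\Omega_{\mathbf{N}}|$ elements), and for each level $\lambda$ let $\mathcal{D}_\lambda$ be the set system of all dyadic blocks $\{\mathbf{c}_{t2^\lambda}, \ldots, \mathbf{c}_{(t+1)2^\lambda - 1}\}$, over all residue classes and all $t \ge 0$ for which the block fits.

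Progressions of length at most $2s$ lie in $\AA_{\le 2s}$, which has $\gamma_2(\AA_{\le 2s}) \le \sqrt{2s}$ by the size bound of Lemma~\ref{lem:gamma2}. For a progression $\{\mathbf{c}_p, \ldots, \mathbf{c}_q\}$ of length more than $2s \ge 2^{\lambda_0 + 1}$, the interval $[p,q]$ contains a subinterval $[p',q']$ with $p'$ and $q'+1$ both divisible by $2^{\lambda_0}$ and with $[p,p'-1]$ and $[q'+1,q]$ of length less than $2^{\lambda_0} \le s$; the standard segment-tree decomposition then writes $[p',q']$ as a disjoint union of at most two dyadic blocks at each level $\lambda$ with $\lambda_0 \le \lambda \le \Lambda$. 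Hence every arithmetic progression is a disjoint union of one (possibly empty) set from each of the systems $\AA_{\le 2s}, \AA_{\le 2s}, \mathcal{D}_{\lambda_0}, \mathcal{D}_{\lambda_0}, \ldots, \mathcal{D}_{\Lambda}, \mathcal{D}_{\Lambda}$, and iterating Lemma~\ref{lem:triangle-ss} gives
\[
\gamma_2(\AA_{\mathbf{N}}) \le 2\gamma_2(\AA_{\le 2s}) + 2\sum_{\lambda = \lambda_0}^{\Lambda} \gamma_2(\mathcal{D}_\lambda) \le 2\sqrt{2s} + 2\sum_{\lambda = \lambda_0}^{\Lambda} \gamma_2(\mathcal{D}_\lambda).
\]

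It remains to bound and sum the $\gamma_2(\mathcal{D}_\lambda)$. For a fixed $\mathbf{b}$ the dyadic blocks of level $\lambda$ partition each residue class (ignoring that the final block of a class may be truncated), so $\mathcal{D}_\lambda$ has maximum degree at most the number of steps $\mathbf{b}$ admitting a residue class with at least $2^\lambda$ elements; by Lemma~\ref{lem:large-sets} this is $\lesssim_d g(2^\lambda)$ with $g(t) \eqdef \max_{I \subseteq [d]} \prod_{i \in I} (N_i/t)$, and the degree bound of Lemma~\ref{lem:gamma2} gives $\gamma_2(\mathcal{D}_\lambda) \lesssim_d \sqrt{g(2^\lambda)}$. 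Now use the identity from equation~\eqref{eq:prod-s}: assuming $N_1 \ge \cdots \ge N_d$, the value $s = f(\mathbf{N})^2$ satisfies $\max_{1 \le k \le d} \prod_{i=1}^k (N_i/s) = s$, so for every $\lambda$ with $2^\lambda \ge s$ each nonempty product $\prod_{i=1}^k (N_i/2^\lambda) = \big(\prod_{i=1}^k (N_i/s)\big)(s/2^\lambda)^k \le s \cdot (s/2^\lambda) = s^2/2^\lambda$, i.e.\ $g(2^\lambda) \le \max(1, s^2/2^\lambda)$. Summing the resulting geometric-type series,
\[
\sum_{\lambda = \lambda_0}^{\Lambda} \sqrt{g(2^\lambda)} \lesssim \sum_{\lambda \ge \lambda_0,\; 2^\lambda \le s^2} \frac{s}{2^{\lambda/2}} + \big|\{\lambda : s^2 < 2^\lambda \le |\Omega_{\mathbf{N}}|\}\big| \lesssim \sqrt{s} + \log |\Omega_{\mathbf{N}}|.
\]
Finally, taking $I = [d]$ in definition~\eqref{eq:ub} gives $f(\mathbf{N}) \ge |\Omega_{\mathbf{N}}|^{1/(2d+2)}$, so $\log |\Omega_{\mathbf{N}}| \le (2d+2)\log f(\mathbf{N}) \le (2d+2) f(\mathbf{N})$; combining the displays yields $\gamma_2(\AA_{\mathbf{N}}) \lesssim_d \sqrt{s} + \log|\Omega_{\mathbf{N}}| \lesssim_d f(\mathbf{N})$.

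Beyond the slightly fussy but routine bookkeeping of the truncated segment-tree decomposition (alignment and truncation of dyadic blocks, empty pieces), the main point is to set up the decomposition so that the per-level contributions $\sqrt{g(2^\lambda)}$ form a geometrically convergent series dominated by the critical scale $2^\lambda \approx s$ --- which is exactly what the fixed-point identity~\eqref{eq:prod-s} for $s = f(\mathbf{N})^2$ provides --- and then to absorb the remaining $O(\log|\Omega_{\mathbf{N}}|)$ levels, on which $g(2^\lambda) = O(1)$, into $f(\mathbf{N})$ via $\log|\Omega_{\mathbf{N}}| \lesssim_d f(\mathbf{N})$. Decomposing every interval all the way down to singletons would be ruined by the small-scale levels, where $g$ is as large as $|\Omega_{\mathbf{N}}|$; handling each common difference $\mathbf{b}$ separately (so that each residue class contributes an interval set system of $\gamma_2 \asymp \log$ of its length) would lose a spurious logarithmic factor. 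Truncating at scale $\approx s$ and summing over levels avoids both pitfalls; this is the $d$-dimensional analogue of Proposition~7.1 of~\cite{matouvsek2020factorization}.
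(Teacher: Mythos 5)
Your argument is correct, but it takes a genuinely different route from the paper's proof. The paper first reduces to prefix-maximal APs via Lemma~\ref{lem:triangle-ss} and then inducts by halving the box along its longest side: a prefix-maximal AP that crosses the cut splits into a maximal AP in one half and a prefix-maximal AP in the other, giving $\gamma_2(\PA_{\vec{N}}) \le \gamma_2(\MM_{\vec{N}'}) + \gamma_2(\PA_{\vec{N}'})$, and the recursion telescopes into a geometric series because $f(\vec{N}') \le 2^{-1/(2d+2)} f(\vec{N})$; Lemma~\ref{lem:gamma2-map-ub} is invoked as a black box at every level of the induction. You instead keep the box fixed and run the multiscale decomposition along each residue class: a segment tree truncated at the critical scale $s = f(\vec{N})^2$, with the sub-$s$ end pieces handled by the size bound and each dyadic level $2^\lambda \gtrsim s$ handled by the degree bound together with Lemma~\ref{lem:large-sets} --- in effect re-deriving the mechanism of Lemma~\ref{lem:gamma2-map-ub} scale by scale rather than citing it, with the fixed-point identity~\eqref{eq:prod-s} making the per-level contributions geometrically decaying above the critical scale. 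The trade-offs are symmetric: the paper needs the power-of-two normalization and the contraction of $f$ under halving, while you pick up an additive $\log|\Omega_{\vec{N}}|$ from the $O_d(1)$-degree levels above scale $s^2$, which you correctly absorb via $f(\vec{N}) \ge |\Omega_{\vec{N}}|^{1/(2d+2)}$. The only loose ends are edge cases you already flag as bookkeeping (at level $\lambda_0$ one has $2^{\lambda_0} \ge s/2$ rather than $\ge s$, costing a harmless $2^{O(d)}$ in $g(2^{\lambda_0})$; and when $s$ is below the threshold of Lemma~\ref{lem:large-sets} the universe has size $O_d(1)$ and the claim is trivial), and neither affects correctness. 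Like the paper's Lemma~\ref{lem:gamma2-ap-ub}, your proof is a pure $\gamma_2$ bound, so it also yields the constructive part of Theorem~\ref{thm:main} via Theorem~\ref{thm:gswalk}.
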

\begin{proof}
    Without loss of generality, we will assume $N_1 \ge N_2 \ldots \ge N_d$. Moreover, we will assume that each $N_i$ is a power of $2$. This is also without loss of generality, since rounding up each $N_i$ to the nearest power of $2$ changes $f(\vec{N})$ by at most a constant factor.
    
    Let $\mathcal{PA}_{\vec{N}}$ be the set of prefix-maximal arithmetic progressions in $\Omega_{\vec{N}}$, i.e., arithmetic progressions $\AP(\vec{a},\vec{b},\ell)$ for which $\vec{a}-\vec{b} \not \in \Omega_{\vec{N}}$. Clearly any arithmetic progression in $\AA_{\vec{N}}$ is the set difference of at most two prefix-maximal arithmetic progressions in $\mathcal{PA}_{\vec{N}}$, so $\gamma_2\left(\AA_{\vec{N}}\right) \le 2 \gamma_2(\mathcal{PA}_{\vec{N}})$ by Lemma~\ref{lem:triangle-ss}. Therefore, it suffices to prove that $\gamma_2(\mathcal{PA}_{\vec{N}}) \lesssim_d f(\vec{N})$. 

    We prove this bound by induction on $|\Omega_{\vec{N}}|$, with the inductive hypothesis 
    \(
    \gamma_2(\mathcal{PA}_{\vec{N}}) \le C_d f\left(\mathbf{N}\right),
    \)
    where $C_d$ is a constant depending on $d$ that we will decide later. 
    The base case $|\Omega_{\vec{N}}| = 1$ is trivial, and the rest of the proof establishes the inductive step.
    
    Let $N'_1 \eqdef N_1/2$ (recall that we assumed $N_1$ is a power of $2$). Let also $\vec{N}' \eqdef (N'_1, N_2, \ldots, N_d)$, and define $\Omega' \eqdef \Omega_{\vec{N}'} = [N'_1]\times [N_2]\times \ldots \times [N_d]$, and $\Omega'' \eqdef \vec{v} + \Omega_{\vec{N}'}$, where $\vec{v}$ is the vector with $N_1'$ in the first coordinate, and $0$ in every other coordinate. Notice that $\Omega'$ and $\Omega''$ partition $\Omega_{\vec{N}}$. 
    Let $\PA'$ and $\PA''$ be, respectively, the set of all prefix-maximal arithmetic progressions in $\Omega'$, and the set of all prefix-maximal arithmetic progressions in $\Omega''$. Note that $\PA'$ and $\PA''$ are isomorphic: for every $\AP_d(\vec{a},\vec{b},\ell) \in \PA''$, $\AP_d(\vec{a}-\vec{v},\vec{b},\ell) \in \PA'$, and, conversely, for any $\AP_d(\vec{a},\vec{b},\ell) \in \PA'$, $\AP_d(\vec{a} + \vec{v},\vec{b},\ell) \in \PA''$. Moreover, $\PA' = \PA_{\vec{N}'}$ by definition. Similarly, let $\MM'$ and $\MM''$ be, respectively, the set of maximal arithmetic progressions in $\Omega'$ and in $\Omega''$. Again, $\MM'$ and $\MM''$ are both isomorphic to $\mathcal{M}_{\vec{N}'}$, the set of all maximal arithmetic progressions in $\Omega_{\vec{N}'}$.

    Take $\AP_d(\vec{a},\vec{b},\ell)$ to be a prefix-maximal arithmetic progression in $\PA_{\mathbf{N}}$ that is not fully contained in $\Omega'$ or $\Omega''$. Notice that this means $b_1 \neq 0$. If $b_1 > 0$, this is equivalent to $a_1 \le N'_1$ and $a_1 + (\ell-1)b_1 > N'_1$. Then $\AP_d(\vec{a},\vec{b},\ell) \cap \Omega' \in \MM'$, and $\AP_d(\vec{a},\vec{b},\ell) \cap \Omega'' \in \PA''$. Similarly, if $b_1 < 0$, then $\AP_d(\vec{a},\vec{b},\ell)$ not being fully contained in  $\Omega'$ or $\Omega''$ is equivalent to $a_1 > N'_1$ and $a_1 + (\ell-1)b_1 \le N'_1$. Then $\AP_d(\vec{a},\vec{b},\ell) \cap \Omega'' \in \MM''$, and $\AP_d(\vec{a},\vec{b},\ell) \cap \Omega' \in \PA'$. 
    
    In summary, one of the following cases holds for any $\AP_d(\vec{a},\vec{b},\ell) \in \PA_{\vec{N}}$:
    \begin{enumerate}
        \item $\AP_d(\vec{a},\vec{b},\ell) \in \PA'$ or $\AP_d(\vec{a},\vec{b},\ell) \in \PA''$;
        
        \item $\AP_d(\vec{a},\vec{b},\ell) = M' \cup A''$, where $M' \in \MM'$ and $A'' \in \PA''$;
        
        \item $\AP_d(\vec{a},\vec{b},\ell) = M'' \cup A'$, where $M'' \in \MM''$ and $A' \in \PA'$.
    \end{enumerate}
    Let $\SS_1$ be the set system on $\Omega_{\vec{N}}$ with sets $\PA'\cup \PA''$. By the disjoint supports property in Lemma~\ref{lem:gamma2}, and the fact that $\PA'$ and $\PA''$ are both isomorphic to $\PA_{\vec{N}'}$, we have $\gamma_2(\SS_1) = \gamma_2(\PA_{\vec{N}'})$. Similarly, let $\SS_2$ be the set system on $\Omega_{\vec{N}}$ with sets $\MM'\cup \MM''$. Again by the disjoint supports property in Lemma~\ref{lem:gamma2}, $\gamma_2(\SS_1) = \gamma_2(\MM_{\vec{N}'})$. By the observations above, each $\AP_d(\vec{a},\vec{b},\ell) \in \PA_{\vec{N}}$ can be written as $S_1 \cup S_2$ for two disjoint sets $S_1 \in \SS_1$ and $S_2 \in \SS_2$ (where we assume, without loss of generality, that the empty set is contained in $\SS_2$). By Lemma~\ref{lem:triangle-ss}, this gives us
    \begin{equation}\label{eq:gamma2-recurse}
        \gamma_2(\PA_{\vec{N}}) \le \gamma_2(\mathcal{M}_{\vec{N}'}) + \gamma_2(\PA_{\vec{N}'}) 
    \le (C'_d + C_d) f(\vec{N}').
    \end{equation}
    For the second inequality, we used the inductive hypothesis and Lemma~\ref{lem:gamma2-map-ub}, with $C'_d$ defined as the implied constant in the upper bound on $\gamma_2(\mathcal{M}_{\vec{N}})$ given by Lemma~\ref{lem:gamma2-map-ub}. To complete the proof, we claim that
    \begin{equation}\label{eq:f-decr}
        f(\vec{N}')\le 2^{-\frac{1}{(2d+2)(2d+4)}}f(\vec{N}).
    \end{equation}
    To see this, suppose that $f(\vec{N}') = \left(\prod_{i \in I}N'_i\right)^{\frac{1}{2|I| + 2}}$ for some $I \subseteq [d]$. If $1 \in I$, then, since $N'_1 = N_1/2$, it is clear that 
    \[
    f(\vec{N}') = \left(\prod_{i \in I}N'_i\right)^{\frac{1}{2|I| + 2}} 
    = 
    2^{-\frac{1}{2|I| + 2}}\left(\prod_{i \in I}N_i\right)^{\frac{1}{2|I| + 2}} \le 
    2^{-\frac{1}{2|I| + 2}} f(\vec{N}).
    \]
    Since $|I| \le d$, this is more than enough to prove \eqref{eq:f-decr}.
    If $1 \not \in I$, then notice that
    \begin{align*}
       f(\vec{N}')^2 = \left(\prod_{i \in I}N'_i\right)^{\frac{1}{|I| + 1}} 
       &= 
    \left(\frac{\left(\prod_{i \in I}N_i\right)^{\frac{1}{|I| + 1}}}{N_1}\right)^{\frac{1}{|I| + 2}}\left(\prod_{i \in I\cup \{1\}}N_i\right)^{\frac{1}{|I| + 2}}\\
    &\le
    N_1^{-\frac{1}{(|I|+1)(|I|+2)}} f(\vec{N}),
    \end{align*}
    where in the first equation we used that $N_i' = N_i$ for all $i \neq 1$, and in the inequality we used that $N_1 \ge N_i$ for all $i$. Now \eqref{eq:f-decr} follows from $N_1 \ge 2$ and $|I| \le d$.
    
    Using \eqref{eq:gamma2-recurse} and \eqref{eq:f-decr}, we see  the bound $\gamma_2(\PA_{\vec{N}}) \le C_d f(\vec{N})$ holds as long as $C_d \ge C'_d / (2^{\frac{1}{(2d+2)(2d+4)}}-1)$. This completes the proof of the inductive step.
\end{proof}

Theorem~\ref{thm:main} is now a direct consequence of Lemma~\ref{lem:gamma2-ap-ub} and Theorem~\ref{thm:gswalk}.

We remark here that Lemma~\ref{lem:gamma2-ap-ub} is tight up to constants depending on the dimension. This follows from the proof of Theorem 5.1 of~\cite{fox2024discrepancy}. An inspection of the proof shows that it gives a lower bound on the average squared discrepancy of some collection of APs in $\Omega_{\vec{N}}$, and that the lower bounds holds for any coloring $\chi:\Omega_{\vec{N}} \to \R$ with $\sum_{\vec{z} \in \Omega_{\vec{N}}} |\chi(\vec{z})|^2 = |\Omega_{\vec{N}}|$. Thus, the lower bound in Theorem 5.1 of~\cite{fox2024discrepancy} in fact gives a lower bound on the smallest singular value of a matrix $\vec{A}$ whose rows are indicator vectors of APs in $\Omega_{\vec{N}}$. In particular, if the number of rows of $\vec{A}$ is $M$, then the lower  bound is 
\[
\sigma_{\min}(\vec{A}) \gtrsim_d \frac{f(\vec{N})\sqrt{M}}{\sqrt{ |\Omega_{\vec{N}}| }}.
\]
The nuclear norm (sum of singular values) of $\vec{A}$, denoted $\|\vec{A}\|_*$, is then bounded as $\|\vec{A}\|_* \gtrsim_d f(\vec{N})\sqrt{M|\Omega_{\vec{N}}|}$, which implies $\gamma_2(\AA_{\vec{N}}) \ge \gamma_2(\vec{A}) \gtrsim_d f(\vec{N})$ by equation~(4)~of~\cite{matouvsek2020factorization}.

\section{Arithmetic Progressions in a Convex Set}

Let us now consider the general setting of convex bodies $K$ in $\R^d$. Let $\Omega_K$ be the integer points in $K$, i.e., $\Omega_K \eqdef \Z^d \cap K$. Let $\AA_K$ be the set system of all arithmetic progressions restricted to $\Omega_K$. We want to prove the following.

\generalbodiesthm*

\subsection{Upper Bound}

In analogy with the $d$-dimensional axis-aligned box, let $\MAP_K(\mathbf{a},\mathbf{b})$ be an arithmetic progression inside $K$ containing $\mathbf{a}$ and with difference $\mathbf{b}$ that cannot be extended further in either direction, i.e., a maximal arithmetic progression in $K$. We have the following bound.

\begin{lemma}\label{lem:generalK-MAP}
    Let $\MM_K := \{\MAP_K(\mathbf{a},\mathbf{b}): \mathbf{a},\mathbf{b} \in \Z^d\}$ be the set of all maximal APs in $\Omega_K$. Then $\gamma_2(\MM_K) \lesssim f(K)$.
\end{lemma}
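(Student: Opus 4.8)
The strategy mirrors the proof of Lemma~\ref{lem:gamma2-map-ub} for boxes, replacing the box-specific counting lemma (Lemma~\ref{lem:large-sets}) with a counting bound on integer steps $\vec{b}$ that admit a long maximal AP inside $K$, and expressing the resulting optimum through the quantity $\zeta_{K-K}$ and the function $f(K)$ from equation~\eqref{eq:ub-body}. Concretely, fix a parameter $s \ge 1$. Let $\MM_{\le s}$ be the subsystem of all $\MAP_K(\vec{a},\vec{b})$ such that $|\MAP_K(\vec{a}',\vec{b})| \le s$ for every $\vec{a}' \in \Z^d$; since every set in $\MM_{\le s}$ has size at most $s$, the size bound of Lemma~\ref{lem:gamma2} gives $\gamma_2(\MM_{\le s}) \le \sqrt{s}$. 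Let $B \eqdef \{\vec{b} \in \Z^d : \max_{\vec{a}} |\MAP_K(\vec{a},\vec{b})| > s\}$ and $\MM_{> s}$ the subsystem of $\MAP_K(\vec{a},\vec{b})$ with $\vec{b} \in B$. For each fixed $\vec{b}$ the sets $\MAP_K(\vec{a},\vec{b})$ are the intersections of the residue classes mod $\vec{b}$ with $\Omega_K$, hence pairwise disjoint, so $\MM_{> s}$ has maximum degree at most $|B|$ and the degree bound of Lemma~\ref{lem:gamma2} yields $\gamma_2(\MM_{> s}) \le \sqrt{|B|}$. The union property then gives $\gamma_2(\MM_K)^2 \lesssim s + |B|$.

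\textbf{Counting $B$.} The key geometric step is to bound $|B|$ by (a constant times) $\zeta_{K-K}(1/s)$. If $\vec{b} \in B$, there is an AP of length $> s$ in $K$ with step $\vec{b}$; any two of its points lie in $K$, so in particular the difference between its first point and its $\lceil s \rceil$-th point, namely $\lfloor s\rfloor \vec{b}$ (or $(\lceil s\rceil-1)\vec{b}$), lies in $K - K$. Equivalently, $\vec{b} \in \frac{1}{s}(K-K)$ up to the floor, so the number of such $\vec{b}$ is at most $|\Z^d \cap \tfrac{c}{s}(K-K)|$ for an absolute constant $c$; since $K-K$ is convex and symmetric this is $\lesssim_d \zeta_{K-K}(1/s)$ after absorbing $c$ into $d$-dependent constants (monotonicity of $\zeta$ in the dilation factor handles the constant, cf.\ the argument in the corollary's proof and Lemma~\ref{lem:zeta-maxshift}). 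Thus $\gamma_2(\MM_K)^2 \lesssim_d s + \zeta_{K-K}(1/s)$. Now choose $s = s^* = \inf\{s : s \ge \zeta_{K-K}(1/s)\}$, which is exactly the value defining $f(K) = \sqrt{s^*}$ in equation~\eqref{eq:ub-body}; at (or just above) this value $\zeta_{K-K}(1/s) \le s$, giving $\gamma_2(\MM_K)^2 \lesssim_d s^* = f(K)^2$, as desired. One should check $s^*$ is well defined and finite: $\zeta_{K-K}(1/s)$ is non-increasing in $s$ and tends to $1$ as $s \to \infty$ (since $K-K$ is bounded), while $s \to \infty$, so the two curves cross.

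\textbf{Main obstacle.} The delicate point is making the step-counting bound for a general convex body fully rigorous, including the interplay of floors/ceilings with the dilation parameter, the precise constant relating $|B|$ to $\zeta_{K-K}(1/s)$, and confirming that $\zeta_{K-K}$ being non-increasing in the dilation lets us replace $\tfrac{c}{s}(K-K)$ by $\tfrac{1}{s}(K-K)$ at the cost of a $d$-dependent constant only; this is where one invokes a Rogers--Shephard / lattice-point comparison of the type used in the corollary. A secondary subtlety is that $\MM_K$ may be infinite as an abstract family but only finitely many distinct subsets of $\Omega_K$ arise (there are at most $|\Omega_K|^2$ residue-class-intersections), so all the $\gamma_2$ properties from Lemma~\ref{lem:gamma2}, which are stated for matrices/finite set systems, apply without issue once we pass to the finite incidence matrix. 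The remaining arguments are exactly as in the box case, so I expect this to go through cleanly once the counting lemma is set up.
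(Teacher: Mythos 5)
Your proposal is correct and follows essentially the same route as the paper: the same split into $\MM_{\le s}$ and $\MM_{>s}$, the same size/degree/union bounds, the bound $|B|$ via lattice points of a dilate of $K-K$, and the same choice $s=s^*=f(K)^2$. The floor/constant issue you flag as the main obstacle in fact dissolves immediately: if a maximal AP with step $\vec{b}$ has more than $s$ points, its endpoints $\vec{x},\vec{y}\in K$ satisfy $\vec{y}-\vec{x}=\ell\vec{b}$ with $\ell\ge s$, and since $K-K$ is convex, symmetric, and contains $\vec{0}$, this gives $s\vec{b}=\tfrac{s}{\ell}(\ell\vec{b})\in K-K$ directly, so $|B|\le\zeta_{K-K}(1/s)$ with no auxiliary constant and no need for the scaling lemma (note that monotonicity alone, which you invoke, would go the wrong way).
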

\begin{proof}
    The proof follows along the same lines as that of Lemma~\ref{lem:gamma2-map-ub}. Fix some $s \ge 1$ to be determined later, and let $\MM_{\le s}$ consist of all $\MAP_K(\mathbf{a},\mathbf{b})$ for which $|\MAP_K(\mathbf{a}',\mathbf{b})| \le s$ for all $\mathbf{a}' \in \Z^d$. As in Lemma~\ref{lem:gamma2-map-ub}, $\gamma_2(\MM_{\le s}) \le \sqrt{s}$ because all sets in $\MM_{\le s}$ have cardinality at most $s$.

    Then let $\MM_{> s} := \MM_K \setminus \MM_{\le s}$. Defining 
    \[
     B \eqdef \left\{\mathbf{b} \in \Z^d: \max_{\mathbf{a} \in \Z^d} |\MAP_{K}(\mathbf{a},\mathbf{b})| > s\right\},
    \]
    we have $\MM_{> s} = \{\MAP_K(\mathbf{a},\mathbf{b}): \mathbf{a} \in \Z^d, \mathbf{b} \in B\}$. Analogous to Lemma~\ref{lem:gamma2-map-ub}, we have $\gamma_2(\MM_{> s}) \le \sqrt{|B|}$. 

    We estimate $|B|$. Take some $\mathbf{b} \in B$ and $\mathbf{a} \in \Z^d$ such that $|\MAP_{K}(\mathbf{a},\mathbf{b})| > s$. Let $\mathbf{x}$ and $\mathbf{y}$ be the endpoints of $\MAP_{K}(\mathbf{a},\mathbf{b})$, i.e., $\mathbf{x},\mathbf{y} \in \MAP_{K}(\mathbf{a},\mathbf{b})$, but $\mathbf{x}-\mathbf{b} \not \in \MAP_{K}(\mathbf{a},\mathbf{b})$, and $\mathbf{y}+\mathbf{b} \not \in \MAP_{K}(\mathbf{a},\mathbf{b})$. Then $\mathbf{y} - \mathbf{x} = \ell \mathbf{b}$ for some $\ell \ge s$. Since both $\mathbf{x}$ and $\mathbf{y}$ are in $K$, this means that $s\mathbf{b} \in K-K$, or, equivalently, $\mathbf{b} \in \frac{1}{s} (K-K)$. We then have that $|B| \le \zeta_{K-K}\left(\frac{1}{s}\right)$.

    Combining the bounds above, we get
    \[
    \gamma_2(\MM_K)^2 =\gamma_2(\MM_{\le s}\cup \MM_{> s})^2
    \le s + \zeta_{K-K}\left(\frac{1}{s}\right).
    \]
    In particular, we have 
    \[
    \gamma_2(\MM_K)^2
    \le 2\inf\left\{s: s \ge \zeta_{K-K}\left(\frac{1}{s}\right)\right\}
    = 2 f(K)^2,
    \]
    as we needed to prove.
\end{proof}

Next we prove the upper bound in Theorem~\ref{thm:generalbodiesthm}, restated as the next theorem.

\begin{theorem}\label{thm:generalbodies-ub}
    For any $d$-dimensional convex body $K$, any $\vec{v} \in \R^d$, $\Omega_{\vec{v} + K} \eqdef \Z^d \cap (\vec{v} + K)$, and the set system $\AA_{\vec{v} + K}$ of all arithmetic progressions on the universe $\Omega_{\vec{v} + K}$, we have
    \[
    \disc(\AA_{\vec{v} + K}) \lesssim \sqrt{\log (1+|\Omega_{\vec{v} + K}|)}\cdot f(K).
    \]
\end{theorem}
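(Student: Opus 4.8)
The plan is to mimic the proof of Theorem~\ref{thm:main-ub} almost verbatim, replacing the axis-aligned box $\Omega_{\vec{N}}$ by the translated convex body $\vec{v}+K$, and replacing Lemma~\ref{lem:gamma2-map-ub} by Lemma~\ref{lem:generalK-MAP}. Concretely, I would first observe that the set system $\MM_{\vec{v}+K}$ of maximal arithmetic progressions inside $\vec{v}+K$ is isomorphic, as a hypergraph, to $\MM_{K'}$ for the centrally translated copy $K' = K - \vec{v}$ (or one can note that $K-K = (\vec{v}+K)-(\vec{v}+K)$, so $\zeta_{(\vec{v}+K)-(\vec{v}+K)} = \zeta_{K-K}$ and hence $f(\vec{v}+K)=f(K)$), so that Lemma~\ref{lem:generalK-MAP} gives $\gamma_2(\MM_{\vec{v}+K}) \lesssim f(K)$. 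This is the step that carries the geometric content, and it is already done.

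Next I would set up the ordering. Let $\sigma:\Omega_{\vec{v}+K}\to[|\Omega_{\vec{v}+K}|]$ be the bijection that ranks the integer points of $\vec{v}+K$ by the lexicographic order $\preclex$ on $\Z^d$. The key structural fact is that every arithmetic progression in $\AA_{\vec{v}+K}$ is the intersection of a maximal arithmetic progression in $\vec{v}+K$ with a lexicographic interval $\{\vec{z}: i\le\sigma(\vec{z})\le j\}$. This is exactly the content of Lemma~\ref{lem:lex}: given $\AP_d(\vec{a},\vec{b},\ell)$, the lemma produces $\vec{x},\vec{y}$ on the line $\{\vec a + t\vec b: t\in\Z\}$ with $\AP_d(\vec{a},\vec{b},\ell) = \{\vec a+t\vec b:t\in\Z\}\cap\{\vec z:\vec x\preclex\vec z\preclex\vec y\}$; intersecting both sides with $\Omega_{\vec{v}+K}$ and using that $\{\vec a+t\vec b:t\in\Z\}\cap(\vec{v}+K) = \MAP_{\vec v+K}(\vec a,\vec b)$ gives
\[
\AP_d(\vec a,\vec b,\ell) = \MAP_{\vec v+K}(\vec a,\vec b)\cap\{\vec z\in\Omega_{\vec v+K}: i\le\sigma(\vec z)\le j\},
\]
with $i=\sigma(\vec x)$, $j=\sigma(\vec y)$. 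Nothing here used convexity of the box beyond the fact that $\vec{v}+K$ is a set that a line meets in a contiguous (integer) segment along the $\preclex$ order — which is automatic since we are already restricting to the maximal AP — so the argument transfers with no change.

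With this in hand, the rest is the same telescoping estimate as in the proof of Theorem~\ref{thm:main-ub}. Let $\chi$ achieve $\pdisc(\MM_{\vec v+K},\sigma)$. For any AP, writing the sum over $\MAP_{\vec v+K}(\vec a,\vec b)$ restricted to $i\le\sigma(\vec z)\le j$ as a difference of two prefix sums (those with $\sigma(\vec z)\le j$ and those with $\sigma(\vec z)<i$), the triangle inequality bounds the discrepancy of the AP by $2\pdisc(\MM_{\vec v+K},\sigma)$. Hence $\disc(\AA_{\vec v+K})\le 2\pdisc(\MM_{\vec v+K},\sigma)$. Finally apply Theorem~\ref{thm:gamma2-pdisc} with $m$ the number of maximal APs and $n=|\Omega_{\vec v+K}|$; since $\log(2mn)\lesssim \log(1+|\Omega_{\vec v+K}|)$ (the number of maximal APs is polynomial in $|\Omega_{\vec v+K}|$, as each is determined by one of its points and a difference vector lying in $\Omega_{\vec v+K}-\Omega_{\vec v+K}$), together with Lemma~\ref{lem:generalK-MAP} and $f(\vec v+K)=f(K)$, we get $\disc(\AA_{\vec v+K})\lesssim\sqrt{\log(1+|\Omega_{\vec v+K}|)}\,f(K)$. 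The only point needing a little care — and the one I would flag as the main obstacle — is the bookkeeping to bound $m$ (and hence the logarithmic factor) purely in terms of $|\Omega_{\vec v+K}|$ when $K$ is an arbitrary body; a maximal AP is determined by its two endpoints in $\Omega_{\vec v+K}$, so $m\le|\Omega_{\vec v+K}|^2$, which suffices.
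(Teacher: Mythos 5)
Your proposal is correct and follows essentially the same route as the paper: translate to note $f(\vec v+K)=f(K)$, order $\Omega_{\vec v+K}$ lexicographically, use Lemma~\ref{lem:lex} to write each AP as a maximal AP intersected with a $\sigma$-interval, telescope to get $\disc(\AA_{\vec v+K})\le 2\pdisc(\MM_{\vec v+K},\sigma)$, and conclude via Theorem~\ref{thm:gamma2-pdisc} and Lemma~\ref{lem:generalK-MAP}. One small nit: a maximal AP is not determined by its two endpoints alone (steps $\vec b$ and $2\vec b$ can yield distinct maximal APs with the same endpoints), but your other count --- a point of $\Omega_{\vec v+K}$ together with a step in $\Omega_{\vec v+K}-\Omega_{\vec v+K}$ --- gives $m\le|\Omega_{\vec v+K}|^3+|\Omega_{\vec v+K}|$, which is all the logarithmic factor needs.
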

\begin{proof}
    Let $K' = \vec{v} + K$ and note that $K'-K' = K-K$, so $f(K') = f(K)$. The proof is analogous to the proof of Theorem~\ref{thm:main} via prefix discrepancy. We construct a bijection $\sigma:\Omega_{K'} \to [|\Omega_{K'}|]$ such that $\disc(A_{K'}) \le 2 \pdisc(\MM_{K'},\sigma)$. The theorem then follows from Theorem~\ref{thm:gamma2-pdisc} and Lemma~\ref{lem:generalK-MAP}. 

    As in the proof of Theorem~\ref{thm:main}, we order $\Omega_{K'}$ lexicographically (i.e., according to $\preclex$), and let $\sigma(\vec{x})$ be the rank of $\vec{x} \in \Omega_{K'}$ in this ordering. Then it follows from Lemma~\ref{lem:lex} that for any arithmethic progression $\AP_d(\vec{a}, \vec{b}, \ell)$ in $\AA_{K'}$ there exist integers $i$ and $j$ for which 
    \[
    \AP_d(\vec{a}, \vec{b}, \ell) = \MAP_K(\vec{a},\vec{b}) \cap \{\vec{x}: i \le \sigma(\vec{x}) \le j\}.
    \]
    Then $\disc(A_{K'}) \le 2 \pdisc(\MM_{K'},\sigma)$ follows from an analogous calculation as the one in the proof of Theorem~\ref{thm:main}. 
\end{proof}

\subsection{Lower Bound}
We would like a lower bound for $\disc(\AA_K)$ with respect to $f(K)$ in~\Cref{thm:generalbodiesthm}. However, the inequality is not true when $K$ is a ``narrow" convex body which avoids all the integer points. For example, we can take $K = [0.25,0.75] \times [0,1]$ in two dimensions, which contains no integer points so $\disc(\AA_K) = 0$, while $f(K) > 0$. Instead, we present a lower bound for $\sup_{\mathbf{v} \in \R^d} \disc(\AA_{\mathbf{v} + K})$ with respect to $f(K)$ using a Fourier analytic proof. The result and its proof generalizes Roth's 1/4-theorem for one-dimensional arithmetic progressions and the lower bound of Fox, Xu, Zhou~\cite{fox2024discrepancy}. 

First, we need a couple of geometric lemmas. The first relates the number of integer points in $\vec{v}+K$ to the number of integer points in ${K-K}$. It can be seen as a counting version of the Rogers-Shephard inequality. We note that other counting versions of the inequality are known~\cite{discrete-rogsh}.
\begin{lemma}\label{lem:zeta-maxshift}
    For any convex body $K$ in $\R^d$,  we have the inequalities
    \[
    \max_{\vec{v} \in \R^d} |\Z^d \cap (\vec{v} + K)| 
    \le |\Z^d \cap (K-K)| 
    \lesssim_d \max_{\vec{v} \in \R^d} |\Z^d \cap (\vec{v} + K)|.
    \]
\end{lemma}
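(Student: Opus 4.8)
The left inequality is the easy direction. For any $\vec{v} \in \R^d$, pick $\vec{w} \in \vec{v}+K$ (assuming $\vec{v}+K$ contains an integer point, else the count is zero and there is nothing to prove). Then for every $\vec{z} \in \Z^d \cap (\vec{v}+K)$, we have $\vec{z}-\vec{w} \in (\vec{v}+K)-(\vec{v}+K) = K-K$, and $\vec{z}-\vec{w}$ is an integer point since both $\vec{z}$ and $\vec{w}$ can be taken to be integer points (choose $\vec{w} \in \Z^d \cap (\vec{v}+K)$). This gives an injection from $\Z^d\cap(\vec{v}+K)$ into $\Z^d\cap(K-K)$, proving $|\Z^d\cap(\vec{v}+K)| \le |\Z^d\cap(K-K)|$, and then taking the max over $\vec{v}$ yields the first inequality.

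For the right inequality — the counting Rogers-Shephard direction — the plan is to mimic the volumetric proof of Rogers-Shephard, replacing volume with the lattice point count. The classical argument writes $\vol_d(K-K) = \int_{\R^d} \vol_d(K \cap (\vec{t}+K))\, d\vec{t}$ (since $\vec{t} \in K-K$ iff $K\cap(\vec{t}+K)\neq\emptyset$, and one integrates the indicator that a point lies in both $K$ and $\vec{t}+K$), and then bounds this using log-concavity of $\vec{t}\mapsto \vol_d(K\cap(\vec{t}+K))$. I would set up the discrete analogue: $|\Z^d \cap (K-K)| \le |\{(\vec{z},\vec{z}') \in \Z^d\times\Z^d : \vec{z},\vec{z}' \in \Z^d\cap(\vec{z}-\vec{z}'+K') \text{ for some translate } K' \text{ of } K\}|$ — more carefully, for each $\vec{b}\in\Z^d\cap(K-K)$ I want to exhibit a translate $\vec{v}_{\vec{b}}+K$ whose integer-point count is comparable to a "slice count", and relate the sum of these slice counts to $(\max_{\vec v}|\Z^d\cap(\vec v+K)|)^2$ up to a factor $\binom{2d}{d}$ or similar, exactly as in Rogers-Shephard. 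The cleanest route is probably: count pairs $(\vec{z}_1,\vec{z}_2) \in (\Z^d\cap(\vec v+K))^2$ summed over a suitable finite family of shifts $\vec v$ in a fundamental domain, observe each difference $\vec z_1 - \vec z_2 \in K-K$ is hit a controlled number of times, and apply a discrete log-concavity / Cauchy–Davenport-style averaging. Alternatively, and perhaps most simply, one can invoke an already-published discrete Rogers–Shephard inequality (the excerpt cites~\cite{discrete-rogsh}) which states exactly $|\Z^d \cap (K-K)| \lesssim_d \max_{\vec v}|\Z^d\cap(\vec v + K)|$, possibly after a cosmetic reduction (e.g., first symmetrizing or passing to $K-K$ which is already centrally symmetric, or handling lower-dimensional degenerate $K$ separately by projecting onto the affine hull).

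\textbf{The main obstacle} will be the discrete log-concavity step: unlike volume, the function $\vec{t} \mapsto |\Z^d \cap (K \cap (\vec t + K))|$ is not log-concave in general, so the textbook Rogers–Shephard estimate does not transfer verbatim. I expect the fix is to lose an extra $2^{O(d)}$ factor — e.g., by covering $K$ with $O(1)^d$ pieces on which the relevant counts behave monotonically, or by inflating $K$ slightly to a body $K''$ with $\vol_d(K'') \asymp_d |\Z^d\cap K''|$ via a standard "each integer point owns a unit cube" argument and then applying the continuous inequality to $K''$ and translating back. Since the lemma only claims a bound up to a constant depending on $d$, any of these routes suffices; the cleanest writeup is to cite the existing discrete Rogers–Shephard bound from~\cite{discrete-rogsh} and note that the degenerate (lower-dimensional) case follows by restricting to the lattice $\Z^d \cap \operatorname{aff}(K)$, which is (after a unimodular change of coordinates) a copy of $\Z^{d'}$ for $d' = \dim K$.
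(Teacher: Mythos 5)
Your proof of the first inequality is exactly the paper's: pick an integer point $\vec{w} \in \Z^d \cap (\vec{v}+K)$ and inject via $\vec{z} \mapsto \vec{z}-\vec{w}$. No issues there.

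For the second inequality, however, your proposal does not contain a proof; it contains three candidate plans, and the primary one is blocked by an obstacle you yourself identify. The discrete analogue of the Rogers--Shephard integral argument does indeed fail because $\vec{t} \mapsto |\Z^d \cap K \cap (\vec{t}+K)|$ is not log-concave, and your suggested repairs are not worked out: the ``each integer point owns a unit cube'' inflation does not help for thin bodies where volume and lattice-point count are incomparable, and falling back on citing \cite{discrete-rogsh} is precisely what the paper avoids (it notes that \emph{other} counting versions are known there; the max-over-translates form needed here, which is what handles bodies containing no integer points at all, is proved from scratch). The paper's actual argument sidesteps log-concavity entirely and is worth internalizing: assume $\vec{0}\in K$ and use the standard volumetric covering bound to cover $K-K$ by $N$ translates $\vec{v}_1+K,\dots,\vec{v}_N+K$ with
\[
N \le \frac{2^d\vol_d\bigl((K-K)+\tfrac12 K\bigr)}{\vol_d(K)} \le \frac{3^d \vol_d(K-K)}{\vol_d(K)} \le 12^d,
\]
where the \emph{continuous} Rogers--Shephard inequality is used only to bound $N$ by a constant depending on $d$. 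By pigeonhole, some translate $\vec{v}_i + K$ contains at least $N^{-1}|\Z^d\cap(K-K)|$ integer points, which is the desired inequality. So the covering number, not a discrete log-concavity statement, is the right discretization device; your writeup as it stands leaves the hard direction unproved.
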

\begin{proof}
    Let us take any $\vec{v} \in \R^d$. To prove the first inequality, we need to show that 
    \(
    |\Z^d \cap (\vec{v} + K)| \le |\Z^d \cap (K-K)|.
    \)
    This is trivial if $\Z^d \cap (\vec{v} + K) = \emptyset$, so let us assume otherwise. Fix any $\vec{x} \in \Z^d \cap (\vec{v} + K)$. Then 
    \[
    |\Z^d \cap (\vec{v} + K)| = |\{\vec{y}-\vec{x}: \vec{y} \in \Z^d \cap (\vec{v} + K)\}| \le |\Z^d \cap (K-K)|,
    \]
    where the last inequality follows since $\vec{y}-\vec{x} \in K - K$ for any $\vec{x},\vec{y} \in \vec{v}+K$. This proves the first inequality.

    Let us now prove the second inequality. Without loss of generality, we can assume that $\vec{0} \in K$, since both sides of the inequality are invariant to shifting $K$. By the standard volumetric argument, there exist vectors $\vec{v}_1, \ldots, \vec{v}_N$ such that $K-K \subseteq \bigcup_{i =1}^N (\vec{v}_i + K)$, where 
    \[
    N \le \frac{2^d\vol_d\left((K-K) + \frac{1}{2}K \right)}{\vol_d(K)}.    
    \]
    Here $\vol_d(\cdot)$ is the $d$-dimensional Lebesgue measure. For a proof, see \cite[Chapter 4]{convgeom-book}. Using the assumption that $\vec{0} \in K$, we have that $K\subseteq K-K$, so $(K-K) + \frac{1}{2}K\subseteq \frac32(K-K)$, and we have
    \[
    N\le \frac{3^d\vol_d(K-K)}{\vol_d(K)} \le 12^d,
    \]
    where the final bound follows from the Rogers-Shephard inequality~\cite{RogersShephard57}. We now have that, for at least one of the $\vec{v}_i$ vectors, by the pigeonhole principle
    \[
    |\Z^d \cap (\vec{v}_i + K)| \ge \frac{1}{N} |\Z^d \cap (K-K)|
    \gtrsim_d |\Z^d \cap (K-K)|,
    \]
    proving the second inequality.
\end{proof}

We also need the following lemma, which is Proposition~4 of Gillet and Soul\'{e}~\cite{gillet1991number}. We include a short proof, which also gives better constants.
\begin{lemma}\label{lem:zeta-scaling}
    For any centrally symmetric convex body $K$ in $\R^d$, i.e., one such that $K = -K$, and any $t \ge 1$, we have
    \[
     \zeta_K(1) \leq \zeta_K(t) \lesssim_d t^d\zeta_K(1).
    \]
\end{lemma}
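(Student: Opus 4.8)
The lower bound $\zeta_K(1) \le \zeta_K(t)$ is immediate since $t \ge 1$ and $K = -K$ (hence $\vec 0 \in K$, so $K \subseteq tK$), and therefore $\Z^d \cap K \subseteq \Z^d \cap tK$. The content is the upper bound $\zeta_K(t) \lesssim_d t^d \zeta_K(1)$.

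The plan is to use a volumetric covering argument in the spirit of Lemma~\ref{lem:zeta-maxshift}. First I would reduce to the case where $K$ contains an integer point: if $\Z^d \cap K = \emptyset$ the claim is only that $\zeta_K(t) = 0$, which need not hold, so instead I would note that by central symmetry $\vec 0 \in K$, so $\zeta_K(1) \ge 1$ always, and the real task is to bound $\zeta_K(t)$ by $C_d t^d$ times something that is at least $1$. The key step is to cover $tK$ by translates of $K$: by the standard volumetric argument (as cited, \cite[Chapter 4]{convgeom-book}) there exist $\vec v_1, \ldots, \vec v_N \in \R^d$ with $tK \subseteq \bigcup_{i=1}^N (\vec v_i + K)$ and
\[
N \le \frac{2^d \vol_d\!\left(tK + \tfrac12 K\right)}{\vol_d(K)} = \frac{2^d \left(t + \tfrac12\right)^d \vol_d(K)}{\vol_d(K)} = (2t+1)^d \le 3^d t^d,
\]
using $t \ge 1$ and the fact that $tK + \tfrac12 K = (t+\tfrac12)K$ by convexity and $K = -K$. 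This gives $\zeta_K(t) \le \sum_{i=1}^N |\Z^d \cap (\vec v_i + K)| \le N \cdot \max_{\vec v} |\Z^d \cap (\vec v + K)|$.

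The final step is to replace $\max_{\vec v} |\Z^d \cap (\vec v + K)|$ by $O_d(\zeta_K(1))$. By Lemma~\ref{lem:zeta-maxshift}, $\max_{\vec v}|\Z^d \cap (\vec v + K)| \le |\Z^d \cap (K-K)| = \zeta_{K-K}(1)$, and since $K = -K$ we have $K - K = 2K$, so this equals $\zeta_K(2)$. At first glance this looks circular — we are trying to bound $\zeta_K(t)$ and have reduced to bounding $\zeta_K(2)$ — but $2$ is a fixed constant, so we only need the already-established covering bound with $t = 2$: $\zeta_K(2) \le 3^d \cdot 2^d \cdot \max_{\vec v}|\Z^d\cap(\vec v + K)|$, which is again bounded using Lemma~\ref{lem:zeta-maxshift} by $\zeta_K(2)$... so I need to be more careful and instead bound $\max_{\vec v}|\Z^d \cap (\vec v + K)|$ directly by $\zeta_K(1)$. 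The clean way: for any $\vec v$, if $\Z^d \cap (\vec v + K) \ne \emptyset$, pick $\vec x$ in it; then $\{\vec y - \vec x : \vec y \in \Z^d \cap (\vec v + K)\} \subseteq \Z^d \cap (K - K) = \Z^d \cap 2K$, and every such difference lies in $2K$, giving $\max_{\vec v}|\Z^d\cap(\vec v+K)| \le \zeta_K(2)$. Then apply the covering bound once more with $t=2$ to get $\zeta_K(2) \le 5^d \max_{\vec v}|\Z^d \cap (\vec v + K)|$, and combining yields $\max_{\vec v}|\Z^d\cap(\vec v+K)| \le \zeta_K(2) \le 5^d \max_{\vec v}|\Z^d\cap(\vec v+K)|$ — still not directly useful. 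The main obstacle, then, is breaking this apparent circularity; the resolution I would pursue is a \emph{direct} pigeonhole: partition $\R^d$ into unit cubes, observe that a translate $\vec v + K$ meeting $k$ integer points forces $K$ itself (after translating one of those points to lie in a fixed fundamental domain) to contain a translate-representative of each, so $\zeta_K(1) \ge \frac{1}{2^d}\max_{\vec v}|\Z^d \cap (\vec v+K)|$ via the sublattice $(2\Z)^d$ and symmetry of $K$ — among $k$ integer points in $\vec v + K$, at least $k/2^d$ lie in a common coset of $(2\Z)^d$, their pairwise differences lie in $2\Z^d \cap 2K$, and halving (using $K$ convex, $\vec 0 \in K$) lands them in $\Z^d \cap K$. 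This gives $\max_{\vec v}|\Z^d\cap(\vec v+K)| \le 2^d \zeta_K(1)$, and chaining with the covering bound $\zeta_K(t) \le 3^d t^d \max_{\vec v}|\Z^d \cap(\vec v+K)|$ completes the proof with an explicit constant $6^d$.
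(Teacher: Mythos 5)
Your proposal is correct, and it reaches the bound by a genuinely different resolution of the one nontrivial difficulty. Both you and the paper start from the same volumetric covering of $tK$, but the paper sidesteps the circularity you ran into by covering $tK$ with translates of $\frac12 K$ rather than of $K$: since $\frac12K-\frac12K = K$ by central symmetry, the first inequality of Lemma~\ref{lem:zeta-maxshift} applied to each piece $\vec{v}_i+\frac12K$ immediately gives $|\Z^d\cap(\vec{v}_i+\frac12K)|\le \zeta_K(1)$, and the covering number $(4t+1)^d$ yields the constant $5^d$. You instead keep the covering pieces at full size $K$, which forces you to bound $\max_{\vec{v}}|\Z^d\cap(\vec{v}+K)|$ by $O_d(\zeta_K(1))$ directly; your pigeonhole over the $2^d$ cosets of $(2\Z)^d$ followed by halving the differences (which lie in $2\Z^d\cap(K-K)=2\Z^d\cap 2K$) is a correct and self-contained way to do this, giving $\max_{\vec{v}}|\Z^d\cap(\vec{v}+K)|\le 2^d\zeta_K(1)$ and the overall constant $6^d$. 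In effect the paper shrinks the covering body so that its difference body is $K$, while you shrink on the lattice side by passing to the sublattice $(2\Z)^d$; the two devices are dual and yield comparable constants. The exploratory detour in the middle of your write-up (noting that invoking Lemma~\ref{lem:zeta-maxshift} alone only bounds the quantity by $\zeta_K(2)$) correctly diagnoses the issue and should simply be cut from a final version, since your coset argument supersedes it.
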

\begin{proof}
    The first inequality is trivial for $t \ge 1$ so we prove the second one. The standard volumetric argument (referring, again, to~\cite[Chapter 4]{convgeom-book} for a proof) gives that there exist vectors $\vec{v}_1, \ldots, \vec{v}_N$ such that $tK \subseteq \bigcup_{i =1}^N (\vec{v}_i + \frac12 K)$, where 
    \[
    N \le \frac{2^d\vol_d\left(tK + \frac{1}{4}K \right)}{\vol_d(\frac12 K)} =  (4t+1)^d.
    \]
    Therefore, at least one of the $\vec{v}_i$ vectors, by the pigeonhole principle, 
    \[
    \left|\Z^d \cap (\vec{v}_i + \frac12 K)\right| \ge \frac{1}{N} |\Z^d \cap tK|
    \ge \frac{1}{(4t+1)^d}\zeta_K(t).
    \]
    The first inequality in Lemma~\ref{lem:zeta-maxshift} then give us that $|\Z^d \cap (\vec{v}_i + \frac12 K)| \le \zeta_K(1)$, since $\frac12K - \frac12 K = K$ by the symmetry of $K$. We thus get 
    \[
    \zeta_K(t) \le (4t+1)^d \zeta_K(1) \le 5^d t^d \zeta_K(1),
    \]
    where we used $t \ge 1$.
\end{proof}

Finally, we recall some basics from Fourier analysis. Let $\ip{\cdot,\cdot}$ be the standard inner product in $\R^d$, and fix $i \eqdef \sqrt{-1}$. For any functions $f, g: \Z^d \rightarrow \C$ with finite support, the Fourier transform $\widehat{f}: [0,1]^d \rightarrow \C$ is defined by $\widehat{f}(\mathbf{r}) \eqdef \sum_{x \in \Z^d}f(\mathbf{x})e^{-2\pi i\ip{\mathbf{x},\mathbf{r}}}$, and the convolution of $f$ and $g$ is $(f\star g)(\mathbf{x}) \eqdef \sum_{\mathbf{r} \in \Z^d}f(\mathbf{r})g(\mathbf{x} - \mathbf{r})$. Further, the convolution identity states that $\widehat{f\star g} = \widehat{f}\cdot\widehat{g}$ pointwise, and Parseval's identity states that 
\[\sum_{\mathbf{x} \in \Z^d}f(\mathbf{x})\overline{g(\mathbf{x})} = \int_{[0,1]^d}\widehat{f}(\mathbf{r})\overline{\widehat{g}(\mathbf{r})}d\mathbf{r}.\]

The next theorem is the lower bound part of Theorem~\ref{thm:generalbodiesthm}.
\begin{theorem}[Convex Body Lower Bound]\label{thm:generalbodies-lb}
    For any convex body $K \subseteq \R^d$ and $\mathbf{v} \in \R^d$, define $\AA_{\mathbf{v} + K}$ to be the family of all arithmetic progressions restricted to $\Omega_{\mathbf{v} + K}$, the set of integer points in $\vec{v} + K$. Then
    \[f\left(K\right) \lesssim_d \sup_{\mathbf{v} \in \R^d} \disc\left(\AA_{\mathbf{v} + K}\right).\]
    Moreover, if $K=-K$, then 
    \(
    f\left(K\right) \lesssim_d \disc\left(\AA_{K}\right).
    \)
\end{theorem}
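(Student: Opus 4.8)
The plan is to adapt Roth's Fourier-analytic argument to the convex-body setting, following the structure of the Fox–Xu–Zhou lower bound but replacing box-specific counting with the scaling estimates for $\zeta_{K-K}$ proved above. Fix a coloring $\chi:\Z^d \to \{-1,+1\}$ (extended by $0$ outside the relevant universe), let $s^* = f(K)^2$, and set a scale $t \asymp s^*$. I would consider the family of ``short'' arithmetic progressions whose common difference $\vec{b}$ lies in $\frac{1}{t}(K-K) \cap \Z^d$ and whose length is about $t$; up to translation of $K$, every such progression sits inside some $\vec{v}+K$. The goal is to show that the average of $\bigl(\sum_{\vec z \in P}\chi(\vec z)\bigr)^2$ over this family is $\gtrsim_d f(K)^2$, which forces at least one progression to have discrepancy $\gtrsim_d f(K)$ for some shift $\vec v$.

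The key steps, in order: \textbf{(1)} Write the indicator of such a progression $P = \AP_d(\vec a,\vec b,\ell)$ as a convolution-type object and expand the squared discrepancy. Averaging over the starting point $\vec a$ (over a suitable box) and over $\vec b \in \frac1t(K-K)\cap\Z^d$, pass to the Fourier side using Parseval, so that the average squared discrepancy becomes an integral of $|\widehat\chi(\vec r)|^2$ against a Fejér-type kernel $\Phi(\vec r)$ built from the progressions. \textbf{(2)} Lower bound $\Phi(\vec r)$: the contribution of the length-$\ell$ sum is a Fejér kernel in the variable $\ip{\vec b,\vec r}$, which is $\gtrsim \ell^2 \gtrsim t^2$ whenever $\|\ip{\vec b,\vec r}\| \lesssim 1/t$; so I need that for every frequency $\vec r$ (other than possibly a negligible set near $0$), a constant fraction of the $\vec b \in \frac1t(K-K)\cap\Z^d$ satisfy $\|\ip{\vec b, \vec r}\|_{\R/\Z} \lesssim 1/t$. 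This is where the Rogers–Shephard–type counting and, crucially, \Cref{lem:zeta-scaling} enter: the number of $\vec b$ in $\frac1t(K-K)$ is $\asymp_d \zeta_{K-K}(1/t)$, and a pigeonhole/averaging argument over the one-dimensional projection $\vec b \mapsto \ip{\vec b,\vec r}$ shows the near-integer condition is met by a positive proportion of them — here the symmetry and homogeneity of $K-K$ make the projection argument clean, and $\zeta_{K-K}$ controls how the count changes with scale. \textbf{(3)} Combine: $\sum_{\vec r}|\widehat\chi(\vec r)|^2 = \sum_{\vec z}|\chi(\vec z)|^2 = |\Omega|$ by Parseval, and the kernel has been normalized so that the average squared discrepancy is $\gtrsim_d t^2 \cdot \frac{1}{|\Omega|}\cdot(\text{something}) \gtrsim_d t \asymp f(K)^2$; one subtracts off the trivial ``DC'' term coming from $\vec r \approx \vec 0$, which contributes only lower-order because $|\Omega| \asymp_d \zeta_{K-K}(1) $ by \Cref{lem:zeta-maxshift} and the choice $t \asymp f(K)^2$ makes $\zeta_{K-K}(1/t) \le t$. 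Taking the worst shift $\vec v$ gives $\sup_{\vec v}\disc(\AA_{\vec v+K}) \gtrsim_d f(K)$. For the symmetric case $K = -K$, one can center all the progressions so that no shift is needed — every relevant progression already lies in $K$ itself (using $K-K = 2K$ and the symmetry to keep endpoints inside $K$) — giving the sharper $\disc(\AA_K) \gtrsim_d f(K)$.

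The main obstacle is Step (2): controlling the Fejér kernel uniformly over all frequencies $\vec r \in [0,1)^d$. For the box this is the content of the Fox–Xu–Zhou computation and reduces to elementary one-dimensional geometric-series estimates coordinate by coordinate; for a general (even symmetric) convex body the set $\frac1t(K-K)\cap\Z^d$ has no product structure, so the argument must instead be made intrinsically via the projection $\vec b\mapsto\ip{\vec b,\vec r}$ and a counting estimate for lattice points of $t(K-K)$ in slabs $\{|\ip{\vec x,\vec r}| \le c\}$. The right tool is again a volumetric/Rogers–Shephard comparison together with \Cref{lem:zeta-scaling} to relate the slab count to $\zeta_{K-K}$ at the appropriate scale; making this quantitative and uniform in $\vec r$ — including handling directions $\vec r$ in which $K-K$ is very ``thin'' — is the delicate part, and is presumably where the hypothesis that $t$ is chosen exactly at the threshold $s^* = \inf\{s : s \ge \zeta_{K-K}(1/s)\}$ gets used to balance the two error terms.
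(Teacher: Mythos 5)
Your overall skeleton (comb functions, convolution, Parseval, a Fejér-type kernel, and balancing parameters at the threshold $s^*=f(K)^2$ using Lemmas~\ref{lem:zeta-maxshift} and~\ref{lem:zeta-scaling}) matches the paper's proof. But your Step (2) --- which you correctly flag as the main obstacle --- is a genuine gap, and the statement you propose to prove there is both stronger than needed and not something you show how to establish. You ask that for every frequency $\vec r$ a \emph{constant fraction} of the $\vec b\in\frac1t(K-K)\cap\Z^d$ satisfy $\|\ip{\vec b,\vec r}\|_{\R/\Z}\lesssim 1/t$, and you propose to get this from lattice-point counts in slabs $\{|\ip{\vec x,\vec r}|\le c\}$, admitting that thin directions are problematic. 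The paper needs only \emph{one} good $\vec b$ per frequency, and gets it by a two-step pigeonhole: among the $\zeta_{K-K}(m/2)$ integer points of $\frac m2(K-K)$, two points $\vec b_1,\vec b_2$ have $\ip{\vec b_1,\vec r}$ and $\ip{\vec b_2,\vec r}$ with fractional parts within $\varepsilon=1/(\zeta_{K-K}(m/2)-1)$ of each other, so $\vec b_{\vec r}\eqdef\vec b_1-\vec b_2\in m(K-K)\cap\Z^d$ has $\ip{\vec b_{\vec r},\vec r}$ within $\varepsilon$ of an integer. A single term then gives the pointwise bound $\sum_{\vec b}|\widehat{g_{\vec b}}(\vec r)|^2\ge|\widehat{g_{\vec b_{\vec r}}}(\vec r)|^2\ge\ell^2/4$ once $\ell\lesssim 1/\varepsilon$; the price of summing the right-hand side over all $\vec b\in m(K-K)\setminus\{\vec 0\}$ is only a multiplicative factor $\zeta_{K-K}(m)$, which the choice $m=4/f(K)^2$, $\ell=f(K)^2/12$ absorbs via $\zeta_{K-K}(1/f(K)^2)\le f(K)^2$. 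No slab estimates, no uniformity-in-$\vec r$ difficulties, and no averaging over starting points $\vec a$ is needed (one simply sums $|g_{\vec b}\star\chi(\vec x)|^2$ over all $\vec x\in\Z^d$ and bounds the support by $\zeta_{K-K}(1+2m\ell)$). Without this replacement your argument does not close.

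A secondary inaccuracy: for the symmetric case you suggest ``centering the progressions so every relevant progression lies in $K$ itself.'' That is not how the shift enters. The choice of $\vec v$ maximizing $|\Z^d\cap(\vec v+K)|$ is used in exactly one inequality, $\zeta_{K-K}(1)\lesssim_d|\Z^d\cap(\vec v+K)|$ (the second half of Lemma~\ref{lem:zeta-maxshift}); when $K=-K$ this holds for $\vec v=\vec 0$ directly, since $K-K=2K$ and $\zeta_K(2)\lesssim_d\zeta_K(1)$ by Lemma~\ref{lem:zeta-scaling}. Every other step of the argument is already valid for an arbitrary shift, including $\vec v=\vec 0$.
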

\begin{proof}
    %First, assume that $\zeta_{K-K}(1) > 1$. Otherwise, there is at most one integer point in $K-K$ and the inequality is already true by shifting $K$ as necessary. 

    We choose $\vec{v}$ to achieve $\max_{v \in R^d} |\Z^d \cap (\vec{v} + K)|$.
    Take a coloring $\chi:\Omega_{\vec{v} + K} \to \{-1,+1\}$, and extend it to a function $\chi: \Z^{d} \rightarrow \{-1, 0, 1\}$ by setting $\chi(\mathbf{x}) = 0$ for all $\mathbf{x} \notin \Omega_{\mathbf{v} + K}$. Further, for each $\mathbf{b} \in \Z^d\setminus \{\vec{0}\}$, define the ``comb'' function $g_{\mathbf{b}}: \Z^{d}\rightarrow \{0, 1\}$ as the indicator function of the set $\{\mathbf{0}, \mathbf{b}, ..., (\ell - 1)\mathbf{b}\}$ for a positive integer $\ell$ to be determined. Note that the convolution $g_{\textbf{b}}\star \chi(\mathbf{x})$ is exactly the discrepancy of an arithmetic progression with step size $-\mathbf{b}$ whose first point is $\mathbf{x}$, i.e.,
    \[
    g_{\mathbf{b}}\star\chi(\mathbf{x}) 
    = \chi\left(\Omega_{\mathbf{v} + K} \cap \{\mathbf{x} - t\mathbf{b}: 0 \leq t < \ell\}\right)
    =\chi(\Omega_{\vec{v} + K} \cap \AP_d(\vec{x},-\vec{b},\ell)),
    \]
    where we used the convention $\chi(S) \eqdef \sum_{\vec{z} \in S}\chi(\vec{z})$ for a set $S \subseteq \Z^d$.
    Let $\mathbf{b} \in m(K - K)\setminus \{\vec{0}\}$ for some real value $m$ to be determined, and define $\widetilde{K} \eqdef K + m\ell(K-K)$. We have
    \begin{equation}\label{eq:generallb-disc-inequality}
         \sum_{\mathbf{x} \in \Z^d}|g_{\mathbf{b}}\star\chi(\mathbf{x})|^2 \leq \disc\left(\AA_{\mathbf{v} + K}\right)^2|\Z^d \cap (\vec{v} + \widetilde{K})|,
    \end{equation}
    since there are at most $|\Z^d \cap (\vec{v} + \widetilde{K})|$ values of $\mathbf{x}$ for which $g_{\mathbf{b}}\star\chi(\mathbf{x})$ is non-zero, and each such value is the discrepancy of an arithmetic progression in $\AA_{\vec{v}+K}$. To see this, recall that $\mathbf{b} \in m(K - K)$ and that $\chi$ is nonzero only on $\Omega_{\vec{v} + K}$. Thus every $\mathbf{x}$ for which $g_{\mathbf{b}}\star\chi(\mathbf{x})\neq 0$ must satisfy $\vec{x} - t\vec{b} \in \vec{v} + K$ for some $0 \le t < \ell$, which implies that $\vec{x} \in \vec{v} + K + m\ell(K-K)$. 

    Since $\widetilde{K} - \widetilde{K} = (1+2m\ell)(K-K)$, equation~\eqref{eq:generallb-disc-inequality} and Lemma~\ref{lem:zeta-maxshift} imply that
    \begin{equation}\label{eq:lb-disc-diffK}
        \sum_{\mathbf{x} \in \Z^d}|g_{\mathbf{b}}\star\chi(\mathbf{x})|^2
        \le 
        \disc\left(\AA_{\mathbf{v} + K}\right)^2\zeta_{K-K}\left(1+2m\ell\right).
    \end{equation}
    On the other hand, by Parseval's identity and the convolution identity, we have
    \begin{align*}
        \sum_{\mathbf{x} \in \Z^d}|g_{\mathbf{b}}\star\chi(\mathbf{x})|^2 &= \int_{[0,1]^d}\widehat{g_{\mathbf{b}}\star\chi}(\mathbf{r})\overline{\widehat{g_{\mathbf{b}}\star\chi}(\mathbf{r})}d\mathbf{r}\\ 
        &= \int_{[0,1]^d}\widehat{g_{\mathbf{b}}}(\mathbf{r})\widehat{\chi}(\mathbf{r})\overline{\widehat{g_{\mathbf{r}}}(\mathbf{r})\widehat{\chi}(\mathbf{r})} d\mathbf{r}
        =\int_{[0,1]^d}|\widehat{g_{\mathbf{b}}}(\mathbf{r})|^2|\widehat{\chi}(\mathbf{r})|^2 d\mathbf{r}.
    \end{align*}
    Together with equation~\eqref{eq:lb-disc-diffK}, we obtain  
    \begin{equation}
        \int_{[0,1]^d}|\widehat{g}_{\mathbf{b}}(\mathbf{r})|^2|\widehat{\chi}(\mathbf{r})|^2 d\mathbf{r} \leq \disc\left(\AA_{\mathbf{v} + K}\right)^2\zeta_{K-K}\left(1+2m\ell\right).
    \end{equation}
    %Consider the convex body $K' := 2m(K - K)$. Note that for any two $\mathbf{b}_1, \mathbf{b}_2 \in m(K-K)$, their difference is in $K'$. Further, the number of integer points in $K'$ is exactly $\zeta_{K-K}(2m)$. 
    Summing over all $\mathbf{b} \in B\eqdef m(K-K)\setminus \{\vec{0}\}$, we have that 
    \begin{equation}\label{eq:generallb-summed}
        \int_{[0,1]^d}\left(\sum_{\mathbf{b} \in B}|\widehat{g_{\mathbf{b}}}(\mathbf{r})|^2\right) |\widehat{\chi}(\mathbf{r})|^2 d\mathbf{r} \leq \disc\left(\AA_{\mathbf{v} + K}\right)^2\zeta_{K-K}\left(1+2m\ell\right)\zeta_{K-K}\left(m\right).
    \end{equation}
    We will now show that the sum $\sum_{\mathbf{b} \in B}|\widehat{g_{\mathbf{b}}}(\mathbf{r})|^2$ can be bounded uniformly from below by a constant multiple of $\ell^2$.
    Let us fix some $\mathbf{r} \in [0, 1]^{d}$. We can find distinct integer points $\mathbf{b}_1, \mathbf{b}_2 \in \frac{m}{2}(K-K)$ such that the fractional parts of $\ip{\mathbf{b}_1,\mathbf{r}}$ and $\ip{\mathbf{b}_2,\mathbf{r}}$ differ by at most $\varepsilon \eqdef \frac{1}{\zeta_{K-K}(m/2)-1}$ by the pigeonhole principle. It follows that there exists some $\mathbf{b}_{\vec{r}} \in B$ such that the fractional part of $\ip{\mathbf{b}_{\vec{r}}, \mathbf{r}}$ lies in $\left[0, \varepsilon\right]$, namely $\vec{b}_{\vec{r}} \eqdef \mathbf{b}_1 - \mathbf{b}_2$. For any 
    \begin{equation}\label{eq:lb-ell-bound}
    \ell \le 1+\frac{1}{6\varepsilon} = \frac{5}{6} + \frac{1}{6}\zeta_{K-K}(m/2),
    \end{equation}
    and any $0 \le t \le \ell-1$, we have
    \[
   1\ge \Re \exp\left(-2\pi i t\ip{\vec{b}_{\vec{r}}, \vec{r}}\right)\ge \cos(2\pi(\ell-1)\varepsilon) \ge \cos\left(\frac{\pi}{3}\right) = \frac{1}{2}.
    \]
    Therefore, we get the bound
    \begin{equation}\label{eq:generallb-gbbound}
        \sum_{\mathbf{b} \in B}|\widehat{g_{\mathbf{b}}}(\mathbf{r})|^2 \geq |\widehat{g_{\mathbf{b}_{\vec{r}}}}(\mathbf{r})|^2 = \left|\sum_{t = 0}^{\ell - 1} \exp\left(- 2\pi i t\ip{\mathbf{b}_{\vec{r}},\mathbf{r}}\right)\right|^2 \ge \frac{\ell^2}{4}.
    \end{equation}
    Since we also have, by Parseval's identity,
    \begin{equation*}
        \int_{[0,1]^d}|\widehat{\chi}(\mathbf{r})|^2 = \sum_{\mathbf{x} \in \Z^d}|\chi(\mathbf{x})|^2 = |\Z^d\cap \mathbf{v} + K|,
    \end{equation*}
    together with equation~\eqref{eq:generallb-summed} and equation~\eqref{eq:generallb-gbbound}, for any $\ell$ satisfying \eqref{eq:lb-ell-bound} we have the inequality
    \begin{equation}\label{eq:generallb-finalbound}
        \disc\left(\AA_{\mathbf{v} + K}\right)^2\zeta_{K-K}\left(1+2m\ell\right)\zeta_{K-K}\left(m\right) \ge \frac{1}{4}\ell^2|\Z^d\cap \mathbf{v} + K|.
    \end{equation}
    We can now choose values for the parameters $\ell$ and $m$.
    Let $\ell \eqdef \frac{f(K)^2}{12}$ and $m \eqdef \frac{4}{f(K)^2}$. First we show that $\ell \leq \frac{\zeta_{K-K}(m/2)}{6}$, which implies that equation~\eqref{eq:lb-ell-bound} holds. Since $f(K)^2$ is defined to be the \emph{smallest} $s$ such that $\zeta_{K-K}\left(\frac{1}{s}\right) \leq s$ (see equation~\eqref{eq:ub-body}), values of $s$ less than $f(K)^2$, namely $\frac{f(K)^2}{2} = \frac{2}{m}$, must satisfy 
    \[\zeta_{K-K}\left(m/2\right) > \frac{2}{m} =  \frac{f(K)^2}{2}.\]
    It follows that $\frac{\zeta_{K-K}(m/2)}{6} > \frac{f(K)^2}{12} = \ell$ as required. 
    
    Substituting $\ell = \frac{f(K)^2}{12}$ and $m = \frac{4}{f(K)^2}$ into equation~\eqref{eq:generallb-finalbound}, we have
    \begin{align}
        \disc\left(\AA_{\mathbf{v} + K}\right)^2\zeta_{K-K}\left(\frac{5}{3}\right)\zeta_{K-K}\left(\frac{4}{f(K)^2}\right) \gtrsim f(K)^4|\Z^d\cap \mathbf{v} + K|.\label{eq:lb-substituted}
    \end{align}
    By Lemmas~\ref{lem:zeta-maxshift}~and~\ref{lem:zeta-scaling},
    \begin{equation}\label{eq:lb-const-scale}
        \zeta_{K-K}\left(\frac53\right) \lesssim_d \zeta_{K-K}(1) \lesssim_d |\Z^d\cap \mathbf{v} + K|,
    \end{equation}
    where, in the last inequality, we used both Lemma~\ref{lem:zeta-maxshift} and the choice of $\vec{v}$ to maximize $|\Z^d \cap (\vec{v} + K)|$. Furthermore, by Lemma~\ref{lem:zeta-scaling}, and the definition of $f(K)$, we have
    \begin{equation}\label{eq:lb-fK}
        \zeta_{K-K}\left(\frac{4}{f(K)^2}\right) \lesssim_d \zeta_{K-K}\left(\frac{1}{f(K)^2}\right) \le f(K)^2.
    \end{equation} 
    Together, the inequalities \eqref{eq:lb-substituted}, \eqref{eq:lb-const-scale}, and \eqref{eq:lb-fK} give us $\disc(\AA_{\vec{v}+K})^2 \gtrsim_d f(K)^2$, as we needed to prove.

    To see that the stronger lower bound for centrally symmetric convex bodies $K$ (ones that satisfy $K=-K$) holds, observe that the only place where we used the choice of $\vec{v}$ was in establishing the last inequality in \eqref{eq:lb-const-scale}. All other inequalities we derived hold for an arbitrary $\vec{v}$, including $\vec{v} = \vec{0}$.  For centrally symmetric convex bodies, equation~\eqref{eq:lb-const-scale} instead becomes
    \[
    \zeta_{K}\left(\frac53\right) \lesssim_d \zeta_{K}(1)
    \]
    using the symmetry of $K$ and Lemma~\ref{lem:zeta-scaling}. The rest of the proof is identical.
\end{proof}

Similarly to our discussion of $\gamma_2(\AA_{\vec{N}})$ after the proof of Lemma~\ref{lem:gamma2-ap-ub}, the proof of Theorem~\ref{thm:generalbodies-lb} also shows the lower bound $\sup_{\vec{v} \in \R^d} \gamma_2(\AA_{\vec{v} + K}) \gtrsim_d f(K)$ for convex bodies $K$, and also $\gamma_2(\AA_{K}) \gtrsim_d f(K)$ when $K$ is centrally symmetric. At present we do not have a matching upper bound on the $\gamma_2$ norm. Proving such an upper bound would also imply a constructive version of Theorem~\ref{thm:generalbodies-ub} (i.e., the upper bound part of Theorem~\ref{thm:generalbodiesthm}).

% \subsection{Upper Bound}

\printbibliography[heading=bibintoc]
\end{document}